\documentclass[12pt, letterpaper]{article}
\usepackage{amsmath}
\usepackage{amsthm}
\usepackage{amsfonts}
\usepackage{amssymb}
\usepackage{color}
\usepackage{enumitem}
\newtheorem{theorem}{Theorem}[section]

\newtheorem{lemma}{Lemma}[section]
\newtheorem{definition}{Definition}[section]
\newtheorem{proposition}{Proposition}[section]
\newtheorem{example}{Example}
\newtheorem{claim}{Claim}
\newtheorem{remark}{Remark}[section]

\allowdisplaybreaks
\newcommand\blfootnote[1]{%
  \begingroup
  \renewcommand\thefootnote{}\footnote{#1}%
  \addtocounter{footnote}{-1}%
  \endgroup
}		

\begin{document}
\title{Monge solutions and uniqueness in multi-marginal optimal transport: weaker conditions on the cost, stronger conditions on the marginals}
\author{BRENDAN PASS \\	 
	\and 
	ADOLFO VARGAS-JIM\'{E}NEZ}
	\date{}
\maketitle
\begin{abstract}
\blfootnote{
		\textit{Date:} 14 February, 2022.\\
\textit{2010 Mathematics Subject Classification.} Primary 	49J21; Secondary 49N15 .\\
\textit{Key words and phrases.} Multi-marginal optimal transportation, Monge/Kantorovich problem, Monge solutions, twist on splitting sets.
\vspace{0.1cm}

BP is pleased to acknowledge support from Natural Sciences and Engineering Research Council of Canada Grant 04658-2018. The work of AVJ was completed in partial fulfillment of the requirements for a doctoral degree in mathematics at the University of Alberta. }
 We establish a general condition on the cost function to obtain uniqueness and Monge solutions in the multi-marginal optimal transport problem, under the assumption that a given collection of the marginals are absolutely continuous with respect to local coordinates. When only the first marginal is assumed to be absolutely continuous, our condition is equivalent to the twist on splitting sets condition found in \cite{Kim}.  In addition, it is satisfied by the special cost functions in our earlier work \cite{PV2020, PV2021}, when absolute continuity is imposed on certain other collections of marginals.  We also present several  new examples of cost functions which violate the twist on splitting sets condition but satisfy the new condition introduced here; we therefore obtain Monge solution and uniqueness results for these cost functions, under  regularity conditions on an appropriate subset of the marginals.
	
\end{abstract}

\section{INTRODUCTION}
Let  $X_{i}$ be smooth manifolds and  $P(X_{i})$ the set of all Borel probability measures on $X_i$, with $i=1, \ldots, m$. Given a continuous real-valued  cost function $c$ on the product space $\prod_{i=1}^{m}X_i$, and compactly supported measures $\mu_i\in P(X_{i})$, the Kantorovich formulation of the multi-marginal  optimal transportation problem asks to minimize
\begin{equation}\label{KP}
\displaystyle \int_{\prod_{i=1}^{m}X_i} c(x_{1},\ldots, x_{m})d\mu,\tag{KP}
 \end{equation}
 \text{over the admissible class}
 \begin{flalign*}
\Pi(\mu_{1},\ldots , \mu_{m})&:= \Big\{  \mu \in P(\prod_{i=1}^{m}X_i): \mu(X_{1}\times \ldots \times X_{i-1}\times A_{i} \times X_{i+1}\times \ldots \times X_{m})=\mu_{i} (A_{i}),\\
 &\qquad\quad\qquad\qquad\text{ for every measurable set}\;\; A_{i}\subseteq X_{i},\quad 1\leq i\leq m \Big\}.
 \end{flalign*} 
On the other hand, in Monge's formulation of the multi-marginal  optimal transportation problem one seeks to minimize
\begin{equation}\label{MP}
 \displaystyle \int_{X_{1}} c(x_{1},T_{2}x_{1},\ldots, T_{m}x_{1})d\mu_{1},\tag{MP}
 \end{equation}
 over its admissible class: the set of all $(m-1)$-tuples of maps $(T_{2}, \ldots, T_{m})$ satisfying the constraint  $(T_{i})_{\sharp} \mu_{1}=\mu_{i}$ for every $i=2, \ldots, m$. Here, $(T_{i})_{\sharp} \mu_{1}$ denotes the \textit{image measure} of $\mu_{1}$ through $T_{i}$, which is defined as $(T_{i})_{\sharp} \mu_{1}(A) = \mu_{1}(T_{i}^{-1}(A))$, for any Borel set $A\subseteq X_{i}$. The admissible class in (\ref{MP}) can be seen as a subclass of $\Pi(\mu_{1},\ldots , \mu_{m})$, as for each  $(m-1)$-tuple $(T_{2}, \ldots, T_{m})$ satisfying the image measure constraint in (\ref{MP}), we get $\mu=(Id,T_{2}, \ldots, T_{m})_{\sharp} \mu_{1}\in \Pi(\mu_{1},\ldots , \mu_{m})$ and 
 $$\displaystyle \int_{\prod_{i=1}^{m}X_i} c(x_{1},\ldots, x_{m})d\mu=\displaystyle \int_{X_{1}} c(x_{1},T_{2}x_{1},\ldots, T_{m}x_{1})d\mu_{1}.$$
This fact lets us interpret (\ref{KP}) as a relaxation of (\ref{MP}).

The classical optimal transport (the case $m=2$) has an abundant literature, reflecting its natural connections with different areas of mathematics and wide variety of applications. See, for instance, \cite{Filippo}\cite{Villani}\cite{Villani2} or \cite{AG2013} for an overview. In the multi-marginal optimal transport (the case $m\geq 3$), diverse applications have also been emerging, among them, matching in economics \cite{CE2010}\cite{CMN2010}\cite{Pass6}, density functional theory in computation \cite{BPG2012} \cite{CFC2013}, and more recently, interpolating among distributions in machine learning and statistics \cite{BM2021}\cite{ZP2019}. The reader is also referred to \cite{Pass3} for an overview. 

One of the foundational results in the classical optimal transport is that, under a twist condition on $c$ (injectivity of the map $x_{2}\mapsto D_{x_{1}}c(x_{1},x_{2})$,  for each fixed $x_{1}\in X_1$) and assuming $\mu_{1}$ absolutely continuous with respect to local coordinates, there exists a unique solution to \eqref{KP} and it is induced by a map \cite{Brenier}\cite{McCann}\cite{Gangbo1}\cite{Gangbo2}. In the multi-marginal setting, a condition playing an analogous role was discovered in \cite{Kim}; this condition was called
	\textit{twist on $c$-splitting sets} and  states that for every $x_1\in X_1$ fixed,  the map $(x_2, \ldots, x_m)\mapsto D_{x_1}c(x_1, x_2, \ldots, x_m)$ is injective on $c$-splitting sets (see definition \ref{Art3:44}). The main result in \cite{Kim} is then that whenever $\mu_1$ is absolutely continuous with respect to local coordinates, and $c$ twisted on $c$-splitting sets, the solution $\gamma$ to \eqref{KP} is unique and induced by a graph.  This encapsulates the results for specific costs, or costs satisfying certain conditions, found in \cite{C2010,  GS1998, H2002, KP2015, Pass4, Pass6, PPV}.  Unlike its two marginal analogue (the classical twist condition), the twist on $c$-splitting sets is very strong; there are many examples of cost functions for which it fails, and for which non-unique, non-Monge type solutions exist \cite{Pass1}\cite{Carlier2}\cite{Pass13}\cite{F2019}\cite{GKR2019}\cite{ColomboDePascaleDiMarino15}.  It is, however, the most general known condition guaranteeing the unique Monge structure of solutions, and it seems unlikely that there is a significantly weaker condition on $c$ under which these hold for \emph{all} choices of marginals $\mu_1,...,\mu_m$ with $\mu_1$ absolutely continuous.  
	
	However, our recent work \cite{PV2020,PV2021} uncovered examples of cost functions which violate the twist on splitting sets condition, but for which we were able to establish Monge solution and uniqueness results; the trade-off is that we had to assume regularity of certain subsets of the marginals, rather than only $\mu_1$.  This naturally motivates the pursuit of a general condition on $c$, under which solutions to \eqref{KP} are of Monge type and unique, for any collection of marginals $\mu_1,...,\mu_m$ with $\mu_i$ absolutely continuous for all $i$ in a given subset of $\{1,2,...,m\}$. The purpose of this paper is to develop such a condition.

\par
Our condition is formulated in terms of $c$-splitting functions (see Definition \ref{Art3:21}) and the points where some of them are differentiable (the ones corresponding to the marginals different than $\mu_1$ where regularity is needed).    More specifically, we require  the mapping $(x_2, \ldots, x_m)\mapsto D_{x_1}c(x_1, x_2, \ldots, x_m)$  to be injective on special subsets generated by $c$-splitting sets and their associated Borel functions (see Definition \ref{Art3:20}). This condition ensures Monge structure and uniqueness of the optimal elements in $\Pi(\mu_{1},\ldots , \mu_{m})$, as we shall see in our main result (Theorem 3.1). This condition reduces to the twist on splitting sets condition in the special case when only regularity of $\mu_1$ is assumed, but reaches substantially beyond it in general.  Aside from including the cost functions in \cite{PV2020} and \cite{PV2021}, our condition applies to a wide variety of new costs, as we illustrate with several examples.


One essential aspect  of the version of the twist condition presented on this work is the dependence on  $c$-splitting functions of the sets where the map $(x_2, \ldots, x_m)\mapsto D_{x_1}c(x_1, x_2, \ldots, x_m)$ is injective (unlike the twist on $c$-splitting sets condition where such map is injective on splitting sets with no dependency on  $c$-splitting functions). The involvement of $c$-splitting functions allow us to naturally generate several differential equations as the presented in Lemma \ref{Art3:8}, which are key to naturally exploit the structure of a variety of cost functions. This type of approach is possible, in particular, by the incorporation of additional regularity conditions on the marginals. We also establish an equivalent condition to the twist on $c$-splitting sets condition that facilitates the proof of some of the results; this condition focuses on every $m$-tuple of $c$-splitting functions and an associated largest $c$-splitting set, instead of every $c$-splitting set and its associated $c$-splitting functions (see Lemma \ref{Art3:16}). 
 \par
In the next section, we recall and introduce the essential definitions used on this work, as well as some key lemmas. In section 3, we establish and prove our main result. In Section 4, we provide several examples of cost functions satisfying our condition.

 \section{Preliminaries}
 Let us recall some main concepts from \cite{Kim}.
\begin{definition}\label{Art3:21}
A set $S\subseteq \prod_{i=1}^{m}X_i$ is called a $c$-splitting set if there are Borel functions $u_i:X_i \mapsto \mathbb{R}$ such that 
\begin{equation}\label{Art3:6}
\sum_{i=1}^{m}u_i(x_{i})\leq c(x_1, \ldots,x_m)
\end{equation}
for every $(x_1, \ldots, x_m)\in \prod_{i=1}^{m}X_i$, and whenever $(x_1, \ldots, x_m)\in S$ equality holds. The functions $u_1(x_1), \ldots, u_m(x_m)$ are called $c$-splitting functions for $S$.
\end{definition}
\begin{definition}\label{Art3:43}
A set $S\subseteq \prod_{i=1}^{m}X_i$ is called $c$-cyclically monotone if for any finite collection $\left\lbrace (x_{1}^{k}, \ldots, x_{m}^{k}) \right\rbrace_{k=1}^{p}\subseteq S$ we get 
\begin{equation*}
\sum_{k=1}^{p}c(x_{1}^{k}, \ldots, x_{m}^{k})\leq \sum_{k=1}^{p}c(x_{1}^{\sigma_1(k)}, \ldots, x_{m}^{\sigma_m(k)}),
\end{equation*}
for every $\sigma_1, \ldots, \sigma_m\in S_P$, where $S_P$ denotes the set of permutations of $P:=\{ 1, \ldots, p\}$.
\end{definition}
It is straightforward to prove that  any $c$-splitting set is $c$-cyclically monotone. When $m=2$, the converse is true by R{\"u}schendorf theorem \cite{R1996}. The converse for $m\geq 3$, remained an open question until Griessler proved that in fact, every $c$-cyclically monotone set is $c$-splitting \cite{G2018}. In this work, we shall find it convenient to use both definitions interchangeably.
\begin{definition}\label{Art3:44}
Let $c$ be a continuous semi-concave cost function. It is called twisted on $c$-splitting sets, whenever for each fixed $x_1^{0}\in X_1$ and $c$-splitting set $S\subseteq \{x_1^{0}\}\times X_2\times \ldots X_m$, the map 
$$(x_{2}, \ldots, x_{m})\mapsto D_{x_{1}}c(x_{1}^{0},x_2,\ldots, x_{m}) $$ is injective on
the subset of  $S$ where $D_{x_1}c(x_1^{0}, x_2, \ldots, x_m)$ exists. 
\end{definition}
\begin{remark}\label{Art3:45}
The main result in \cite{Kim} establish that if $c$ is twisted on $c$-splitting sets, then every solution to $(\ref{KP})$ is induced by a map, whenever $\mu_1$ is absolutely continuous with respect to local coordinates. 
\end{remark}
 \par
A classical duality theorem of Kellerer \cite{Kellerer}, automatically connects Definitions \ref{Art3:21} and \ref{Art3:43} with the optimal measures $\gamma$ in \eqref{KP}. From now on, $spt(\gamma)$ denotes the support of $\gamma$.
\begin{lemma}\label{Art3:2}
A measure $\gamma \in \Pi(\mu_{1},\ldots , \mu_{m})$ is optimal in \eqref{KP} if and only if $spt(\gamma)$  is a $c$-splitting set. 
\end{lemma}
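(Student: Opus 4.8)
The plan is to use the Kellerer duality theorem together with the equivalence between c-splitting and c-cyclically monotone sets (Griessler's theorem, cited after Definition 2.3). I will prove the two implications separately, with the interesting direction being that optimality forces the support to be c-splitting.

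\emph{Sufficiency.} Suppose $spt(\gamma)$ is a $c$-splitting set, with $c$-splitting functions $u_1,\ldots,u_m$. For any competitor $\tilde\gamma\in\Pi(\mu_1,\ldots,\mu_m)$ I would integrate the pointwise inequality $\sum_{i=1}^m u_i(x_i)\le c(x_1,\ldots,x_m)$ against $\tilde\gamma$; since each $u_i$ depends only on $x_i$ and $\tilde\gamma$ has marginals $\mu_i$, the left side integrates to $\sum_{i=1}^m\int_{X_i}u_i\,d\mu_i$, a quantity independent of $\tilde\gamma$. Hence
\[
\sum_{i=1}^m\int_{X_i}u_i\,d\mu_i\le \int c\,d\tilde\gamma .
\]
On the other hand, for $\gamma$ itself equality holds $\gamma$-a.e. (since equality holds on $spt(\gamma)$ and $\gamma$ is concentrated there), so $\int c\,d\gamma=\sum_{i=1}^m\int_{X_i}u_i\,d\mu_i\le\int c\,d\tilde\gamma$, giving optimality of $\gamma$. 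One must check the $u_i$ are $\mu_i$-integrable; this follows from compact support of the marginals, continuity (hence boundedness on the relevant compact sets) of $c$, and the two-sided bounds the splitting inequality imposes on each $u_i$, after the standard normalization.

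\emph{Necessity.} Suppose $\gamma$ is optimal. The first step is to show $spt(\gamma)$ is $c$-cyclically monotone: this is the classical argument — if it failed, one could find finitely many points $(x_1^k,\ldots,x_m^k)\in spt(\gamma)$, $k=1,\ldots,p$, and permutations $\sigma_1,\ldots,\sigma_m$ strictly lowering the cyclic sum; using small mass on disjoint neighborhoods of these points one constructs a competitor with strictly smaller cost, contradicting optimality. The second step is simply to invoke Griessler's theorem \cite{G2018}: every $c$-cyclically monotone set is $c$-splitting, so $spt(\gamma)$ is $c$-splitting. (Alternatively, one can invoke Kellerer's duality directly to produce optimal $u_i$ in the dual problem and argue $spt(\gamma)$ lies in the contact set $\{\sum u_i=c\}$, which is the original formulation in \cite{Kim}; either route works, and I would cite \cite{Kellerer} for the duality and $\cite{G2018}$ for the converse.)

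The main obstacle is the necessity direction, specifically making the perturbation argument for $c$-cyclical monotonicity rigorous in the multi-marginal setting with merely continuous $c$ and only Borel (not necessarily closed) competitor supports — one needs to handle the measurable selection of the neighborhoods and the gluing of the perturbation so that marginals are preserved exactly; this is standard but slightly technical. Everything else is a short integration argument plus citation of known duality/equivalence results.
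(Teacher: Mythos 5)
Your plan is correct, but it is worth noting that the paper does not actually prove this lemma: it simply records it as an immediate consequence of Kellerer's duality theorem \cite{Kellerer}, so your write-up supplies an argument the paper leaves as a citation. Your sufficiency direction (integrate $\sum_i u_i\le c$ against any competitor, use equality $\gamma$-a.e.) is exactly the standard half of the duality argument, and your integrability remark is fine: each $u_i$ is bounded above on the compact supports by plugging a fixed reference point into the splitting inequality, and bounded below $\mu_i$-a.e.\ by using the equality on $spt(\gamma)$ (or, equivalently, by passing to the $c$-conjugate potentials as in Remark \ref{Art3:3}). For necessity your primary route (perturbation argument giving $c$-cyclical monotonicity of $spt(\gamma)$, then Griessler's theorem \cite{G2018} to upgrade to $c$-splitting) differs from the route the paper implicitly takes, which is pure duality as in \cite{Kim}; both work, and your route has the advantage that cyclical monotonicity is verified pointwise on the closed support, so no further argument is needed there. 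If you instead take the duality shortcut you mention in parentheses, be careful about one point: Kellerer's theorem produces Borel potentials for which the contact set $\{\sum_i u_i = c\}$ need not be closed, so equality $\gamma$-a.e.\ only yields a full-measure splitting subset of $spt(\gamma)$; to conclude that the entire closed support is a splitting set you should first replace the $u_i$ by their $c$-conjugates, which are continuous on the compact supports, making the contact set closed and hence containing $spt(\gamma)$. With that caveat addressed, either version of your argument is complete.
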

\begin{remark}\label{Art3:3}
The above lemma guarantees the existence of an $m$-tuple $(u_1, \ldots, u_m)$ of $c$-splitting functions to $spt(\gamma)$, for every optimal measure  $\gamma \in \Pi(\mu_{1},\ldots , \mu_{m})$ in \eqref{KP}. A key fact for this work is that these $c$-splitting functions can be taken to be $c$-conjugate \cite{GS1998}\cite{Pass4}. More specifically,  for each $i$,
\begin{equation}\label{Art3:1}
u_{i}(x_{i})= inf_{x_j\in X_{j}, j\neq i}\Big ( c(x_{1},\ldots,x_{m}) -\sum_{j\neq i}u_{j}(x_{j})\Big ).
\end{equation} 
\end{remark}
Let us finish this section with a convenient lemma, which will reduce some of the technical details of the results in this work. For this, recall that given an open set $D$ and a semi-concave function $f:D\subseteq\mathbb{R}^{n}\mapsto \mathbb{R}$, with semiconcavity constant $\lambda$, the superdifferential of $f$ with respect to a given $x\in A$ fixed is defined as the set 
$$\partial f(x)= \left\lbrace z\in \mathbb{R}^{n}: f(y)-f(x)\leq z\cdot (y-x) + \lambda\mid y-x\mid^{2} \;\; \forall y\in D\right\rbrace.$$
 If $f$ is defined in a smooth manifold we keep the same definition by using local coordinates. 
It can be proved that $\partial f(x)$ is nonempty for every $x\in D$ and $Df(x)$ exists if and only if $\partial f(x)$ is a singleton.
\begin{lemma}\label{Art3:8}
  Let $c$ be a continuous semi-concave cost function, and $u_i:X_i \mapsto \mathbb{R}$  Borel functions, $i\in \{1, \ldots, m\}$, satisfying the inequality condition in  \eqref{Art3:6}. Let $(x_1^{0}, \ldots, x_m^{0})\in \prod_{i=1}^{m}X_i$ such that
  \begin{equation}\label{Art3:7}
\sum_{i=1}^{m}u_i(x_{i}^{0})=c(x_1^{0}, \ldots,x_m^{0}).
  \end{equation}
  If there exists $k\in \{1, \ldots, m\}$ such that $Du_{k}(x_k^{0})$ exists, then $D_{x_k}c(x_{1}^{0}, \ldots, x_{m}^{0})$ exists and 
  \begin{equation*}
  Du_{k}(x_k^{0})=D_{x_k}c(x_{1}^{0}, \ldots, x_{m}^{0}).
  \end{equation*}
\end{lemma}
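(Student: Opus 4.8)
The plan is to exploit the variational characterization of superdifferentials together with the fact that a function is differentiable at a point precisely when its superdifferential there is a singleton. Fix the point $(x_1^0,\ldots,x_m^0)$ satisfying the equality \eqref{Art3:7}, and assume $Du_k(x_k^0)$ exists. First I would record the pointwise inequality: for every $y$ in a coordinate neighborhood of $x_k^0$,
\begin{equation*}
u_k(y) + \sum_{j\neq k} u_j(x_j^0) \leq c(x_1^0,\ldots,x_{k-1}^0,y,x_{k+1}^0,\ldots,x_m^0),
\end{equation*}
with equality at $y = x_k^0$ by \eqref{Art3:7}. Subtracting, this says that the function $y \mapsto c(x_1^0,\ldots,y,\ldots,x_m^0) - u_k(y)$ attains a local minimum (in fact a global one on the whole chart) at $y = x_k^0$. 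Write $g(y) := c(x_1^0,\ldots,y,\ldots,x_m^0)$; since $c$ is semi-concave, so is $g$, with some semiconcavity constant $\lambda$.

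Next I would translate the minimality of $g - u_k$ at $x_k^0$ into a statement about superdifferentials. Because $g$ is semi-concave its superdifferential $\partial g(x_k^0)$ is nonempty; pick any $z \in \partial g(x_k^0)$. The key step is to show $z = Du_k(x_k^0)$, which forces $\partial g(x_k^0)$ to be the singleton $\{Du_k(x_k^0)\}$, hence $D_{x_k}c(x_1^0,\ldots,x_m^0) = Dg(x_k^0)$ exists and equals $Du_k(x_k^0)$, as claimed. To get $z = Du_k(x_k^0)$: from $z \in \partial g(x_k^0)$ we have $g(y) - g(x_k^0) \leq z\cdot(y - x_k^0) + \lambda|y-x_k^0|^2$, while from differentiability of $u_k$ we have $u_k(y) - u_k(x_k^0) = Du_k(x_k^0)\cdot(y-x_k^0) + o(|y-x_k^0|)$. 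Combining with the minimality inequality $g(y) - g(x_k^0) \geq u_k(y) - u_k(x_k^0)$, I obtain
\begin{equation*}
Du_k(x_k^0)\cdot(y-x_k^0) + o(|y-x_k^0|) \leq z\cdot(y-x_k^0) + \lambda|y-x_k^0|^2
\end{equation*}
for all $y$ near $x_k^0$. Writing $y - x_k^0 = t v$ for a fixed unit vector $v$ and letting $t \to 0^+$, dividing by $t$, gives $Du_k(x_k^0)\cdot v \leq z\cdot v$; since $v$ is arbitrary this yields $Du_k(x_k^0) = z$.

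The only subtlety — and the main thing to be careful about rather than a genuine obstacle — is the reduction to local coordinates on the manifold $X_k$ and checking that $c(x_1^0,\ldots,\cdot,\ldots,x_m^0)$ inherits semi-concavity as a function on the chart (this is immediate from the definition of semi-concave cost, applied slice-wise, and is exactly why the hypothesis "$c$ continuous semi-concave" is present), together with the observation that the superdifferential in a chart is nonempty, which the excerpt has already recorded. Once these bookkeeping points are in place, the argument is the two-line first-order comparison above. I would present it by fixing a chart around $x_k^0$, noting $g := c(x_1^0,\ldots,\cdot,\ldots,x_m^0)$ is semi-concave on it, observing $g - u_k$ has a minimum at $x_k^0$, and then running the superdifferential squeeze to conclude $\partial g(x_k^0) = \{Du_k(x_k^0)\}$, hence $D_{x_k}c(x_1^0,\ldots,x_m^0) = Du_k(x_k^0)$.
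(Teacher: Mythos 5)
Your proposal is correct and follows essentially the same route as the paper: both exploit semi-concavity of the slice $x_k\mapsto c(x_1^0,\ldots,x_k,\ldots,x_m^0)$ to get a nonempty superdifferential, and then use the splitting equality \eqref{Art3:7} together with differentiability of $u_k$ to force that superdifferential to be the singleton $\{Du_k(x_k^0)\}$. The only difference is presentational: the paper states the inclusion $\partial_{x_k}c(x_1^0,\ldots,x_m^0)\subseteq\partial u_k(x_k^0)=\{Du_k(x_k^0)\}$, while you unpack that inclusion with the explicit first-order comparison along directions $v$, which is the same argument written out in detail.
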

\begin{proof}
Since $c$ is semi-concave, the map $x_{k}\mapsto c(x_{1}^{0}, \ldots,x_{k-1}^{0},x_{k}, x_{k+1}^{0}, \ldots, x_{m}^{0})$ is semi-concave. Then $\partial_{x_k} c(x_{1}^{0}, \ldots,x_{k-1}^{0},x_{k}, x_{k+1}^{0}, \ldots, x_{m}^{0})$ is nonempty for every $x_{k}\in X_{k}$ fixed, where $\partial_{x_k} c(x_{1}^{0}, \ldots,x_{k-1}^{0},x_{k}, x_{k+1}^{0}, \ldots, x_{m}^{0})$ denotes the superdifferential of $c$ with respect to $x_{k}$. Using \eqref{Art3:7}, it follows that 
$$\partial_{x_k} c(x_{1}^{0}, \ldots, x_{m}^{0})\subseteq \partial u_{k}(x_k^{0})=\{Du_{k}(x_k^{0})\}.$$
Thus, $\partial_{x_k} c(x_{1}^{0}, \ldots, x_{m}^{0})$ is a singleton, which implies that $D_{x_k} c(x_{1}^{0}, \ldots, x_{m}^{0})$ exists and $Du_{k}(x_k^{0})=D_{x_k}c(x_{1}^{0}, \ldots, x_{m}^{0})$, completing the proof.
\end{proof}
\subsection{Essential definitions and preliminary results}
Here we establish the main concepts used on this work. For this, we  first introduce some convenient notations. Assume $\{k_{i}\}_{i=1}^{r}\subseteq \{2, \ldots, m\}$, with $k_{1}< k_{2}<\ldots < k_{r}$. 
\begin{itemize}
\item Let $S\subseteq \prod_{i=1}^{m}X_i$ be a  $c$-splitting set and $(u_{1},\ldots, u_{m})$ an $m-$tuple  of c-splitting functions for $S$. Given $x_1^{0} \in \pi_{1}(S)$, where $\pi_{1}$ is the canonical projection from $\prod_{i=1}^{m}X_{i}$ to $X_1$, we define 
\begin{align*}
W_{x_1^{0}k_1\ldots k_r}(u_{1}, \ldots, u_{m},S)&:=\Big\{ (x_{2}, \ldots, x_{m})\in \prod_{i=2}^{m}X_i:  (x_{1}^{0},x_2, \ldots, x_{m})\in S \;\text{and}\\
&\qquad \qquad \qquad\qquad \qquad \text{$Du_{k_{i}}(x_{k_{i}})$}\;\;\text{exists for each $i=1, \ldots r$}\Big\}.
\end{align*}
\item For a given $m$-tuple of Borel functions $(u_{1}^{\prime},\ldots, u_{m}^{\prime})$ satisfying inequality \eqref{Art3:6}, and $x_1^{0}\in X_1$, we define
\begin{align*}
M_{x_1^{0}k_1\ldots k_r}(u_1^{\prime}, \ldots, u_m^{\prime})&:=\Big\{ (x_{2}, \ldots, x_{m})\in \prod_{i=2}^{m}X_i: \text{$Du_{k_{i}}^{\prime}(x_{k_{i}})$ exists for each }\\
&\qquad \qquad\quad i=1,\ldots,r\;\;\text{and}\;u_1^{\prime}(x_1^{0})+\sum_{i=2}^{m}u_i^{\prime}(x_i)=c(x_1^{0}, x_2, \ldots,x_m)\Big\}.
\end{align*}
\end{itemize}
From now on, if there is not danger of confusion, we will write $W_{x_1^{0}k_1\ldots k_r}$ and  $M_{x_1^{0}k_1\ldots k_r}$ for $W_{x_1^{0}k_1\ldots k_r}(u_{1}, \ldots, u_{m},S)$ and $M_{x_1^{0}k_1\ldots k_r}(u_1^{\prime}, \ldots, u_m^{\prime})$ respectively.
\begin{remark}\label{Art3:16}
Note that $W_{x_1^{0}k_1\ldots k_r}(u_{1}, \ldots, u_{m},S)\subseteq M_{x_1^{0}k_1\ldots k_r}(u_1, \ldots, u_m)$, for any $c$-splitting set $S$, $m$-tuple  of c-splitting functions $(u_{1},\ldots, u_{m})$ for $S$ and $x_1^{0} \in \pi_{1}(S)$. Hence, for any fixed $(u_1, \ldots, u_m)$ satisfying inequality \eqref{Art3:6} and $x_{1}^{0}\in X_1$, we get
$$\bigcup_{S\in \mathcal{F}}W_{x_1^{0}k_1\ldots k_r}(u_{1}, \ldots, u_{m},S)\subseteq M_{x_1^{0}k_1\ldots k_r}(u_1, \ldots, u_m),$$
where
\begin{align*}
\mathcal{F}&:=\Big\{  S\subseteq \prod_{i=1}^{m}X_i: x_1^{0}\in \pi_{1}(S)\;\; \text{and $S$ is a splitting set having $(u_1, \ldots,u_m)$}\\
& \quad\quad\text{as $c$-splitting functions}\Big\}.
\end{align*}
On the other hand, for any $(x_2, \ldots, x_m)\in M_{x_1^{0}k_1\ldots k_r}(u_1, \ldots, u_m)$, the singleton $\bar{S}=\{x_1^{0}, x_2, \ldots, x_m\}$ is trivially a $c$-splitting set satisfying $(x_2, \ldots, x_m)\in W_{x_1^{0}k_1\ldots k_r}(u_{1}, \ldots, u_{m},\bar{S})$ with $\bar{S}\in \mathcal{F}$. This immediately implies 
$$\bigcup_{S\in \mathcal{F}}W_{x_1^{0}k_1\ldots k_r}(u_{1}, \ldots, u_{m},S)= M_{x_1^{0}k_1\ldots k_r}(u_1, \ldots, u_m).$$
\end{remark}
\begin{definition}\label{Art3:20}
Let $c$ be a continuous semi-concave cost function, and let $\{k_{i}\}_{i=1}^{r}\subseteq \{2, \ldots, m\}$, with $k_{1}< k_{2}<\ldots < k_{r}$.
We say $c$ is twisted on $c$-splitting sets  with respect to the variables $x_{1}, x_{k_{1}},\ldots, x_{k_{r}}$, if for each $c$-splitting set $S\subseteq \prod_{i=1}^{m}X_i$ and $m$-tuple $(u_{1},\ldots, u_{m})$ of c-splitting functions for $S$,  the map 
$$(x_{2}, \ldots, x_{m})\mapsto D_{x_{1}}c(x_{1}^{0},x_2,\ldots, x_{m}) $$ is injective on
the subset of  $W_{x_1^{0}k_1\ldots k_r}$ where $D_{x_1}c(x_1^{0}, x_2, \ldots, x_m)$ exists, for each fixed $x_1^{0} \in \pi_{1}(S)$ satisfying  $W_{x_1^{0}k_1\ldots k_r}\neq \emptyset$.
\end{definition}
\begin{remark}
Note that Definition \ref{Art3:44} is equivalent to $c$ being twisted on $c$-splitting sets with respect to the variable $x_{1}$. Hence, our main result (Theorem \ref{Main Theorem}), generalizes the main result in \cite{Kim} (see Remark \ref{Art3:45}).
\end{remark}
We now proceed to prove a lemma, which provides an alternative way to check the condition above.
\begin{lemma}\label{Art3:10}
Let $c$ be a continuous, semi-concave cost function. Let $\{k_{i}\}_{i=1}^{r}\subseteq \{2, \ldots, m\}$, with $k_{1}< k_{2}<\ldots < k_{r}$. The cost $c$ is twisted on $c$-splitting sets  with respect to the variables $x_{1}, x_{k_{1}},\ldots, x_{k_{r}}$ if and only if 
for every $m$-tuple of Borel functions $(u_1,\ldots, u_m)$ satisfying inequality \eqref{Art3:6} and for every $x_1^{0}\in X_1$ with $M_{x_1^{0}k_1\ldots k_r}\neq \emptyset$, we get that the map 
$$(x_{2}, \ldots, x_{m})\mapsto D_{x_{1}}c(x_{1}^{0},x_2,\ldots, x_{m}) $$ is injective on
the subset of  $M_{x_1^{0}k_1\ldots k_r}$ where $D_{x_1}c(x_1^{0}, x_2, \ldots, x_m)$ exists.  
\end{lemma}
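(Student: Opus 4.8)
The plan is to prove the biconditional in Lemma \ref{Art3:10} by exploiting the set identity recorded in Remark \ref{Art3:16}, namely that $\bigcup_{S\in\mathcal{F}}W_{x_1^{0}k_1\ldots k_r}(u_1,\ldots,u_m,S)=M_{x_1^{0}k_1\ldots k_r}(u_1,\ldots,u_m)$ for any fixed $m$-tuple $(u_1,\ldots,u_m)$ satisfying \eqref{Art3:6} and any $x_1^{0}\in X_1$. The forward direction (twist on splitting sets w.r.t.\ $x_1,x_{k_1},\ldots,x_{k_r}$ implies the $M$-injectivity statement) is the substantive one; the reverse direction is essentially immediate because every $W$-set is contained in an $M$-set with the same $c$-splitting functions (first inclusion in Remark \ref{Art3:16}), so injectivity of $D_{x_1}c(x_1^0,\cdot)$ on the larger set $M_{x_1^0 k_1\ldots k_r}$ trivially restricts to injectivity on $W_{x_1^0 k_1\ldots k_r}$; one just has to note that if $W_{x_1^0 k_1\ldots k_r}\neq\emptyset$ then $M_{x_1^0 k_1\ldots k_r}\neq\emptyset$, so the hypothesis applies.

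For the forward direction, fix an $m$-tuple $(u_1,\ldots,u_m)$ of Borel functions satisfying \eqref{Art3:6} and a point $x_1^0\in X_1$ with $M_{x_1^0 k_1\ldots k_r}\neq\emptyset$. Suppose $(x_2,\ldots,x_m)$ and $(x_2',\ldots,x_m')$ both lie in the subset of $M_{x_1^0 k_1\ldots k_r}$ where $D_{x_1}c(x_1^0,\cdot)$ exists, and that $D_{x_1}c(x_1^0,x_2,\ldots,x_m)=D_{x_1}c(x_1^0,x_2',\ldots,x_m')$. By the identity in Remark \ref{Art3:16}, there is some $S\in\mathcal{F}$ (explicitly, the singleton $\bar S=\{(x_1^0,x_2,\ldots,x_m)\}$, which is trivially $c$-splitting with $(u_1,\ldots,u_m)$ as splitting functions) with $(x_2,\ldots,x_m)\in W_{x_1^0 k_1\ldots k_r}(u_1,\ldots,u_m,\bar S)$, and likewise $(x_2',\ldots,x_m')\in W_{x_1^0 k_1\ldots k_r}(u_1,\ldots,u_m,\bar S')$ for a corresponding singleton $\bar S'$. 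The key observation is that the union $S^{\ast}:=\bar S\cup\bar S'$ is again a $c$-splitting set with the \emph{same} $c$-splitting functions $(u_1,\ldots,u_m)$: indeed $(u_1,\ldots,u_m)$ still satisfies \eqref{Art3:6}, and equality holds at both points of $S^\ast$ precisely because these are points of $M_{x_1^0 k_1\ldots k_r}$. Moreover $x_1^0\in\pi_1(S^\ast)$ and both $(x_2,\ldots,x_m)$ and $(x_2',\ldots,x_m')$ belong to $W_{x_1^0 k_1\ldots k_r}(u_1,\ldots,u_m,S^\ast)$ — the differentiability conditions on $u_{k_i}$ are inherited — and on the subset of this $W$-set where $D_{x_1}c(x_1^0,\cdot)$ exists. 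Now apply Definition \ref{Art3:20} to the $c$-splitting set $S^\ast$ with splitting functions $(u_1,\ldots,u_m)$: the map is injective there, hence $(x_2,\ldots,x_m)=(x_2',\ldots,x_m')$, as desired.

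The main obstacle to watch for is the verification that $S^\ast=\bar S\cup\bar S'$ (or more generally that merging two points of $M_{x_1^0 k_1\ldots k_r}$ into one candidate splitting set) genuinely produces a $c$-splitting set retaining $(u_1,\ldots,u_m)$ as splitting functions. This is where the definition of $M_{x_1^0 k_1\ldots k_r}$ does the work: membership in $M_{x_1^0 k_1\ldots k_r}$ encodes exactly the equality $u_1(x_1^0)+\sum_{i=2}^m u_i(x_i)=c(x_1^0,x_2,\ldots,x_m)$, so there is no compatibility condition to check beyond what is already assumed. One should state carefully that the inequality \eqref{Art3:6} is a pointwise property of the tuple $(u_1,\ldots,u_m)$ independent of $S$, so it is automatically preserved, and that a finite (here, two-point) union of $c$-splitting sets sharing a common tuple of splitting functions is again $c$-splitting with that tuple. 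A secondary, purely bookkeeping point is to confirm that the differentiability requirements defining the $W$- and $M$-sets ("$Du_{k_i}$ exists") transfer correctly between the singletons and $S^\ast$; since these conditions depend only on the coordinates $x_{k_i}$ of the individual points and not on the ambient set, this is immediate, but worth a sentence. With these observations in place the proof is short, essentially a translation of Remark \ref{Art3:16} into the language of the two twist conditions.
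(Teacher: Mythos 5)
Your proof is correct and follows essentially the same route as the paper: both directions rest on the observation that the defining equality in $M_{x_1^{0}k_1\ldots k_r}$ lets its points be assembled into a $c$-splitting set that retains $(u_1,\ldots, u_m)$ as splitting functions, to which Definition \ref{Art3:20} is then applied. The only cosmetic difference is that you build the two-point set $\bar S\cup\bar S'$ for the given pair, whereas the paper takes the single set $S=\{(x_1^{0},x_2,\ldots,x_m): (x_2,\ldots,x_m)\in M_{x_1^{0}k_1\ldots k_r}\}$ so that $W_{x_1^{0}k_1\ldots k_r}=M_{x_1^{0}k_1\ldots k_r}$ at one stroke.
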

\begin{proof}
The converse is straightforward, as for every $c$-splitting set $S\subseteq \prod_{i=1}^{m}X_i$ and $m$-tuple $(u_1, \ldots,u_m)$ of $c$-splitting functions for $S$, we have $W_{x_1^{0}k_1\ldots k_r}\subseteq M_{x_1^{0}k_1\ldots k_r}$ for  each fixed $x_1^{0} \in \pi_{1}(S)$. Hence, if $W_{x_1^{0}k_1\ldots k_r}\neq \emptyset$ we get $M_{x_1^{0}k_1\ldots k_r}\neq \emptyset$, which implies that the map $(x_{2}, \ldots, x_{m})\mapsto D_{x_{1}}c(x_{1}^{0},x_2,\ldots, x_{m}) $ is injective on the subset of  $M_{x_1^{0}k_1\ldots k_r}$ where $D_{x_1}c(x_1^{0}, x_2, \ldots, x_m)$ exists, in particular, it is injective on the subset of  $W_{x_1^{0}k_1\ldots k_r}$ where $D_{x_1}c(x_1^{0}, x_2, \ldots, x_m)$ exists; that is,  $c$ is twisted on $c$-splitting sets  with respect to the variables $ x_{1}, x_{k_{1}},\ldots, x_{k_{r}}$. Assume now that the cost $c$ is twisted on $c$-splitting sets  with respect to the variables $x_{1}, x_{k_{1}},\ldots, x_{k_{r}}$. Let $(u_1,\ldots, u_m)$ be an  $m$-tuple of Borel functions satisfying inequality \eqref{Art3:6}, and fix $x_1^{0} \in X_1$. Assume $M_{x_1^{0}k_1\ldots k_r}\neq \emptyset$, and set 
\begin{align*}
S&:=\Big\{ (x_1^{0},x_{2}, \ldots, x_{m})\in \prod_{i=1}^{m}X_i:(x_{2}, \ldots, x_{m}) \in M_{x_1^{0}k_1\ldots k_r}\Big\}\\
&= \Big\{ (x_1^{0},x_{2}, \ldots, x_{m})\in \prod_{i=1}^{m}X_i: u_1(x_1^{0})+\sum_{i=2}^{m}u_i(x_i)=c(x_1^{0}, x_2, \ldots,x_m), \;\text{and}\;\\
&\qquad \qquad \qquad\qquad \qquad\qquad \qquad\quad\;\;\text{$Du_{k_{i}}(x_{k_{i}})$ exists for each $i=1, \ldots r$}\Big\}.
\end{align*}
Clearly, $S$ is a $c$-splitting set,  $ \pi_{1}(S) = \{x_1^{0}\}$ and $W_{x_1^{0}k_1\ldots k_r}=M_{x_1^{0}k_1\ldots k_r}\neq \emptyset$. This immediately implies, by assumption that the map $(x_{2}, \ldots, x_{m})\mapsto D_{x_{1}}c(x_{1}^{0},x_2\ldots, x_{m}) $ is injective on the subset of $M_{x_1^{0}k_1\ldots k_r}$ where $D_{x_1}c(x_1^{0}, x_2, \ldots, x_m)$ exists, completing  the proof of the lemma. \
\end{proof}
\begin{remark}\label{Art3:58}
Note that by Lemma \ref{Art3:8}, if $(u_1,\ldots, u_m)$ is an  $m$-tuple of Borel functions satisfying inequality \eqref{Art3:6} and $Du_1(x_1^{0})$ exists for some
 $x_1^{0}\in X_1$ satisfying $M_{x_1^{0}k_1\ldots k_r}\neq \emptyset$, then the map 
$$(x_{2}, \ldots, x_{m})\mapsto D_{x_{1}}c(x_{1}^{0},x_2,\ldots, x_{m}) $$ is injective on
the subset of  $M_{x_1^{0}k_1\ldots k_r}$   where $D_{x_1}c(x_1^{0}, x_2, \ldots, x_m)$ exists if and only if $M_{x_1^{0}k_1\ldots k_r}$ is a singleton. As we shall see in the next two sections, this fact will be convenient for the proof of our main result (Theorem \ref{Main Theorem}) and  the propositions in Section 4.
\end{remark}
\section{\textbf{Existence and Uniqueness to Monge Problem
}}
We now state and prove our main result.
\begin{theorem}\label{Main Theorem}
Assume the measures $\mu_{1},\mu_{k_1}, \ldots,\mu_{k_r}$ are absolutely continuous with respect to local coordinates, with  $\{k_{i}\}_{i=1}^{r}\subseteq \{2, \ldots, m\}$, $k_{1}< k_{2}<\ldots < k_{r}$. Assume $c$ is twisted on $c$-splitting sets  with respect to the variables $x_{1}, x_{k_{1}},\ldots, x_{k_{r}}$. Then the solution $\gamma$ in (\ref{KP}) is concentrated on a graph of a measurable map and it is unique.
\end{theorem}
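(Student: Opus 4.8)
The plan is to follow the classical Kantorovich-duality strategy used in Kim–Pass, but feeding in the new twist condition at exactly the point where injectivity is invoked, and using the extra regularity of $\mu_{k_1}, \ldots, \mu_{k_r}$ to put ourselves in a situation where the new condition (rather than the old twist on splitting sets) is what we need. Let $\gamma$ be an optimal measure in \eqref{KP}. By Lemma \ref{Art3:2}, $S := \mathrm{spt}(\gamma)$ is a $c$-splitting set, and by Remark \ref{Art3:3} we may fix an $m$-tuple $(u_1, \ldots, u_m)$ of $c$-conjugate $c$-splitting functions for $S$. The $c$-conjugacy together with semi-concavity of $c$ makes each $u_i$ locally semi-concave (being a local infimum of uniformly semi-concave functions in $x_i$ when the other variables are restricted to compact sets), hence differentiable $\mu_i$-a.e.\ for $i \in \{1, k_1, \ldots, k_r\}$ since those marginals are absolutely continuous with respect to local coordinates. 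Also $D_{x_1}c(x_1, x_2, \ldots, x_m)$ exists $\gamma$-a.e.: on the set where $Du_1$ exists, Lemma \ref{Art3:8} gives existence of $D_{x_1}c$ and the identity $Du_1(x_1) = D_{x_1}c(x_1, x_2, \ldots, x_m)$.

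Next I would isolate a full-measure subset of $S$ on which the twist condition bites. Let $S_0 \subseteq S$ be the set of points $(x_1, \ldots, x_m)$ at which $Du_1(x_1)$ and each $Du_{k_i}(x_{k_i})$ exist; by absolute continuity of $\mu_1, \mu_{k_1}, \ldots, \mu_{k_r}$ and the fact that $\gamma$ projects to these marginals, $\gamma(S_0) = 1$. Fix $x_1^{0} \in \pi_1(S)$ at which $Du_1(x_1^{0})$ exists. Then every $(x_2, \ldots, x_m)$ with $(x_1^{0}, x_2, \ldots, x_m) \in S_0$ lies in $W_{x_1^{0} k_1 \ldots k_r}(u_1, \ldots, u_m, S)$, and on this set $D_{x_1}c(x_1^{0}, x_2, \ldots, x_m)$ exists and equals the constant $Du_1(x_1^{0})$, again by Lemma \ref{Art3:8}. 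Since $c$ is twisted on $c$-splitting sets with respect to $x_1, x_{k_1}, \ldots, x_{k_r}$, the map $(x_2, \ldots, x_m) \mapsto D_{x_1}c(x_1^{0}, x_2, \ldots, x_m)$ is injective on the subset of $W_{x_1^{0} k_1 \ldots k_r}$ where it is defined; combined with the fact that this map is constant there (equivalently, appealing directly to Remark \ref{Art3:58}), the fiber of $S_0$ over $x_1^{0}$ is a single point. Thus $S_0$ is the graph of a map $x_1 \mapsto (T_2(x_1), \ldots, T_m(x_1))$ defined for $\mu_1$-a.e.\ $x_1$; measurability of $T := (T_2, \ldots, T_m)$ follows from measurability of $S_0$ by a standard selection/graph argument, so $\gamma = (\mathrm{Id}, T_2, \ldots, T_m)_\sharp \mu_1$.

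For uniqueness I would run the usual doubling argument: given two optimizers $\gamma, \bar\gamma$, the measure $\frac{1}{2}(\gamma + \bar\gamma)$ is also optimal, hence is concentrated on a graph by the paragraph above, which forces $\gamma = \bar\gamma$ since $\gamma, \bar\gamma \le 2 \cdot \frac{1}{2}(\gamma+\bar\gamma)$ are then both concentrated on that same graph and share the first marginal $\mu_1$. (Care is needed because the $c$-splitting functions depend on the measure, but applying the graph conclusion to the single optimizer $\frac12(\gamma+\bar\gamma)$ sidesteps this.)

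The main obstacle, and the step deserving the most care, is the first one: verifying that the $c$-conjugate splitting functions $u_1, u_{k_1}, \ldots, u_{k_r}$ are genuinely differentiable almost everywhere with respect to the corresponding marginals. This requires checking that, with the other variables ranging over the (compact) supports of the remaining marginals, each $u_i$ is a locally uniform infimum of semi-concave functions with a common semi-concavity constant — relying on the hypothesis that $c$ is semi-concave and the marginals are compactly supported — so that $u_i$ is itself locally semi-concave and Rademacher/Alexandrov-type differentiability applies. Everything downstream (Lemma \ref{Art3:8}, the twist condition, Remark \ref{Art3:58}) is then a clean bookkeeping of which derivatives exist on which full-measure sets; the only other delicate point is the measurable-selection step to produce $T$ from the graph $S_0$, which is routine.
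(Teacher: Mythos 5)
Your proof is correct and follows essentially the same route as the paper: semi-concavity of the $c$-conjugate potentials plus absolute continuity of $\mu_1,\mu_{k_1},\ldots,\mu_{k_r}$ yields a full-$\gamma$-measure set of differentiability, Lemma \ref{Art3:8} makes $D_{x_1}c$ constant (equal to $Du_1(x_1^{0})$) on each fiber so the twist hypothesis forces singleton fibers, and the convexity/doubling argument gives uniqueness. The only (harmless) difference is that you apply Definition \ref{Art3:20} directly to $W_{x_1^{0}k_1\ldots k_r}(u_1,\ldots,u_m,spt(\gamma))$, whereas the paper passes to the equality set and the sets $M_{x_1^{0}k_1\ldots k_r}$ via Lemma \ref{Art3:10} and Remark \ref{Art3:58}.
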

\begin{proof}
Let us first prove that $\gamma$ is induced by a map. The uniqueness assertion will follows immediately by a standard argument. By Lemma \ref{Art3:2} and Remark \ref{Art3:3}, there exists an $m$-tuple $(u_1, \ldots, u_m)$ of $c$-splitting functions for $spt(\gamma)$  satisfying \eqref{Art3:1}. Fix $i\in \{0,1, \ldots, r\}$ and set $k_{0}=1$. From \eqref{Art3:1}, we deduce that the function $u_{k_i}(x_{k_i})$ is semi-concave for each $k_i$, as it is the infimum of semi-concave functions. Hence, $u_{k_i}(x_{k_i})$ is differentiable almost everywhere with respect to local coordinates. It follows that $u_{k_i}(x_{k_i})$ is differentiable $\mu_{k_i}$ almost everywhere, as the measure $\mu_{k_i}$ is absolutely continuous. It implies that $\gamma ( S)=1$, where
\begin{align*}
S&:=\Big\{ (x_1,x_{2}, \ldots, x_{m})\in \prod_{i=1}^{m}X_i:\text{$Du_{1}(x_1)$ \;and} \;\;\text{$Du_{k_{i}}(x_{k_{i}})$ exist for each $i=1, \ldots r$}, \;\text{and}\;\\
&\qquad \qquad \qquad\qquad \qquad\qquad \quad\;\;\sum_{i=1}^{m}u_i(x_i)=c(x_1, x_2, \ldots,x_m)\Big\}.
\end{align*}
 
Fix $x_1^{0} \in \pi_{1}(S)$. Clearly, $M_{x_1^{0}k_1\ldots k_r}\neq \emptyset$, and so by Lemma \ref{Art3:10} the map $(x_{2}, \ldots, x_{m})\mapsto D_{x_{1}}c(x_{1}^{0},x_2,\ldots, x_{m}) $ is injective on
the subset of  $M_{x_1^{0}k_1\ldots k_r}$ where $D_{x_1}c(x_1^{0}, x_2, \ldots, x_m)$ exists, this happens if and only if the set $M_{x_1^{0}k_1\ldots k_r}$  is a singleton (see Remark \ref{Art3:58}), which implies $W_{x_1^{0}k_1\ldots k_r}$ is also a singleton. This completes the proof that $\gamma$ is induced by a map. To prove that $\gamma$ is unique note that for any pair of solutions $\gamma_{1}$ and $\gamma_{2}$ (which are induced by maps $T_{1}$ and $T_{2}$), we have $\frac{1}{2}\left( \gamma_{1} + \gamma_{2}\right)$ is also a solution (by the convexity of the set $\Pi(\mu_{1},\ldots , \mu_{m})$), which implies that it is also concentrated on the graph of some map. However, $\frac{1}{2}\left( \gamma_{1} + \gamma_{2}\right)$ must be concentrated on the union of the graphs of $T_{1}$ and $T_{2}$. We conclude $T_{1}=T_{2}$ $\mu_1$-a.e., completing the proof of the theorem.
\end{proof}
\begin{remark}
Note from the above proof that the regularity condition on the first marginal (which is a standard assumption in the classical and multi-marginal optimal transport for uniqueness results), allow us to focus on the set $\{x_1\in X_1: Du_1(x_1) \;\; \text{exists}\}$, for every $m$-tuple $(u_1, \ldots, u_m)$ of Borel functions satisfying inequality \eqref{Art3:6}. In what follows such regularity condition holds, so to get uniqueness of solutions in the Monge-Kantorovich problem it suffices to prove that  the set $M_{x_1^{0}k_1\ldots k_r}(u_1, \ldots,u_m)$ is a singleton for every $x_1^{0}\in\{x_1\in X_1: Du_1(x_1) \;\; \text{exists}\}$ fixed, for every $m$-tuple $(u_1, \ldots, u_m)$ of Borel functions satisfying inequality \eqref{Art3:6} (see also Remark \ref{Art3:58}).
\end{remark}
\section{\textbf{Examples.
 }}\label{sect: examples}
 Here, we illustrate the result obtained in Theorem \ref{Main Theorem} throughout several examples.\\
For the next proposition, let us recall some basic concepts from graph theory. An \textit{undirected simple graph} $G$ on the set $\{1, \ldots,m\}$ is an ordered pair $(V(G), E(G))$, consisting of a  \textit{set of vertices} $V(G)=\{1, \ldots,m\}$ and a \textit{set of edges} $E(G)\subseteq\left\lbrace \{i,j\}:\text{ $i,j \in V(G)$ and $i\neq j$} \right\rbrace$. Given $i,j\in V(G)$, $i$ and $j$ are called \textit{adjacent} if $\{i,j\}\in E(G)$.  A \textit{path} is a nonempty sequence $\{\{i_1, i_2\},\ldots, \{i_{l-1}, i_l\}\}\subseteq E(G)$ with $i_k\neq i_r$ for all $k\neq r$. 
\begin{proposition}[One dimensional sub-modular type costs]
Assume $c(x_1,\ldots, x_m)$ is semi-concave and $C^{2}$, where $X_i=\mathbb{R}$ for all $i=1,\ldots, m$. Let $G$ be an undirected simple graph on $\{1, \ldots, m\}$ and assume
\begin{enumerate}
\item $\dfrac{\partial^{2}c}{\partial x_{i}\partial x_j}\leq 0$ for all $i\neq j$ and $\dfrac{\partial^{2}c}{\partial x_{i}\partial x_j}< 0$ for all $\{i,j\} \in E(G)$.
\item There  exists a set $P:=\{k_1,\ldots, k_r\}\subseteq \{1,\ldots, m\}$   such that for every $i\in \{1,\ldots, m\}$ not adjacent to  $1$,  there is a path $\{\{1, i_1\},\{i_1, i_2\},\ldots, \{i_{l-1}, i_l\}, \{i_l, i\}\}$ in $G$ with $\{i_1, \ldots, i_l\}\subseteq P$.
\end{enumerate}
Then $c$ is twisted on $c$-splitting sets with respect to the variables $x_1, x_{k_1}, \ldots, x_{k_r}$.
\end{proposition}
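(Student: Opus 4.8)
The plan is to verify the criterion from Lemma \ref{Art3:10}: fix an $m$-tuple of Borel functions $(u_1,\dots,u_m)$ satisfying \eqref{Art3:6}, fix $x_1^0\in X_1$ with $M_{x_1^0 k_1\dots k_r}\neq\emptyset$, and show the map $(x_2,\dots,x_m)\mapsto \partial_{x_1}c(x_1^0,x_2,\dots,x_m)$ is injective on the subset of $M_{x_1^0 k_1\dots k_r}$ where this derivative exists. In fact, by Remark \ref{Art3:58}, it suffices to treat the case where $Du_1(x_1^0)$ exists (since in the proof of Theorem \ref{Main Theorem} only such $x_1^0$ are used; more carefully, one checks the full statement of Lemma \ref{Art3:10}, but the substantive case is this one) and show that $M_{x_1^0 k_1\dots k_r}$ is then a \emph{singleton}. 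So suppose $(x_2,\dots,x_m)$ and $(\bar x_2,\dots,\bar x_m)$ both lie in $M_{x_1^0 k_1\dots k_r}$; the goal is $x_i=\bar x_i$ for all $i$.

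The first key step is to extract differential information from the contact equality. Since $u_1(x_1^0)+\sum_{i=2}^m u_i(x_i)=c(x_1^0,x_2,\dots,x_m)$ and the analogous equation holds at the barred point, Lemma \ref{Art3:8} gives us that wherever $Du_{k_j}$ exists (which it does at the $k_j$-components of points in $M$), we have $Du_{k_j}(x_{k_j})=\partial_{x_{k_j}}c(x_1^0,x_2,\dots,x_m)$, and similarly $Du_1(x_1^0)=\partial_{x_1}c(x_1^0,x_2,\dots,x_m)=\partial_{x_1}c(x_1^0,\bar x_2,\dots,\bar x_m)$ — the last equality because both points are in $M$ and $Du_1(x_1^0)$ is a single fixed vector. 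So the two points have the same first partial of $c$, and moreover for each index $k\in\{1,k_1,\dots,k_r\}$, the $k$-th partial of $c$ at the point agrees with the value $Du_k$ at the $k$-th coordinate of that point, and these are also forced to match across the two points \emph{at shared coordinates}. The plan is now to run a connectivity argument along the graph $G$: using $X_i=\mathbb{R}$ and $\partial^2 c/\partial x_i\partial x_j<0$ on edges (strict submodularity along $G$) together with $\partial^2 c/\partial x_i \partial x_j\le 0$ off $G$, show that agreement of a partial derivative propagates to agreement of coordinates along paths in $P$ emanating from vertex $1$.

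Concretely: vertex $1$ is handled because $\partial_{x_1}c$ is the same fixed vector at both points. For a neighbour $i_1\in P$ of $1$: the function $t\mapsto \partial_{x_1}c(x_1^0, \dots)$ as a function of the $i_1$-coordinate (other coordinates varying between the two configurations) is — hmm, here is the subtlety — monotone in $x_{i_1}$ only if we can freeze the other coordinates, which we cannot a priori. The honest route is the one used in \cite{PV2020,PV2021}: build a one-variable-at-a-time comparison. Set up, for each index $j$ along a path, a telescoping/ordering argument. For the neighbour $i_1$ of $1$: since $Du_{i_1}(x_{i_1})=\partial_{x_{i_1}}c(x_1^0,x_2,\dots,x_m)$ and $Du_{i_1}(\bar x_{i_1})=\partial_{x_{i_1}}c(x_1^0,\bar x_2,\dots,\bar x_m)$, and we want to compare — one considers the difference $\partial_{x_1}c$ at the two points as an integral of $\partial^2 c/\partial x_1\partial x_\ell$ along a segment joining them, exploiting that all these mixed second partials are $\le 0$ with strict sign along $G$-edges, and that the $u_k$-gradient constraints pin down the intermediate quantities. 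The intended mechanism is that equality $\partial_{x_1}c(\text{pt})=\partial_{x_1}c(\bar{\text{pt}})$ combined with all mixed partials having one sign forces the integrand to vanish, hence $x_\ell=\bar x_\ell$ for every $\ell$ adjacent to $1$; then feed $x_{i_1}=\bar x_{i_1}$ with $i_1\in P$ back in, use $Du_{i_1}(x_{i_1})=\partial_{x_{i_1}}c$ to get equality of the $i_1$-partials at the two points, and iterate the same monotonicity argument out along the path to reach every vertex. Hypothesis (2) guarantees every vertex is reachable from $1$ through vertices of $P$, and membership in $P$ is exactly what makes the relevant $Du_{k_j}$-equalities available (via $M$'s definition) to restart the induction at each step.

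The main obstacle is precisely making the "monotonicity propagates equality" step rigorous when many coordinates move simultaneously: one needs a careful ordering of the coordinates of the two points or an integral-geometry argument that isolates the effect of one coordinate at a time, leveraging $X_i=\mathbb{R}$ (so that "bigger/smaller" makes sense) and the global sign condition (1) on all mixed second derivatives — not just on $G$-edges — so that cross-terms never have the wrong sign. I expect to write, for a path $1=i_0,i_1,\dots,i_l,i$ with $i_1,\dots,i_l\in P$, an inequality chain of the form: if, say, $\bar x_{i_1}>x_{i_1}$, then strict submodularity on $\{i_0,i_1\}$ forces $\partial_{x_1}c(\bar{\text{pt}})<\partial_{x_1}c(\text{pt})$ after accounting for the (wrong-sign-free, by (1)) contributions of the other coordinates — contradiction; hence $\bar x_{i_1}=x_{i_1}$, and then the $Du_{i_1}$-contact identity lets us repeat with $i_1$ in the role of $1$. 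Closing this requires care that the "other coordinates" contributions can indeed be controlled in sign, which is where condition (1)'s global inequality (not merely the strict edge version) does the work; this is the step I'd spend the most effort on, and it mirrors the technical heart of \cite{PV2020,PV2021}.
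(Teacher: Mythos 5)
There is a genuine gap, and it sits exactly at the step you flagged as "the step I'd spend the most effort on": your proposed direct comparison of $\partial_{x_1}c$ at the two points $x=(x_1^0,x_2,\dots,x_m)$ and $\bar x=(x_1^0,\bar x_2,\dots,\bar x_m)$ does not work, because writing $\partial_{x_1}c(\bar x)-\partial_{x_1}c(x)$ as an integral of $\sum_{i}\frac{\partial^2 c}{\partial x_1\partial x_i}(\bar x_i-x_i)$ gives terms whose signs are \emph{not} controlled: the mixed partials are all $\le 0$, but the differences $\bar x_i-x_i$ can have either sign, so a coordinate that moved the "other way" contributes a term $\ge 0$ which can cancel the strictly negative contribution from an edge $\{1,i_1\}$ with $\bar x_{i_1}>x_{i_1}$. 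Hence the claimed contradiction "$\bar x_{i_1}>x_{i_1}$ forces $\partial_{x_1}c(\bar x)<\partial_{x_1}c(x)$" is false as stated, and hypothesis (1)'s global inequality does not rescue it; the same problem recurs at every step of your intended iteration along the path.

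The missing idea in the paper's proof is a lattice (coordinatewise rearrangement) trick, made available precisely because $X_i=\mathbb{R}$. One forms $x^{+}$ and $x^{-}$ with $x_k^{\pm}=\max\{x_k,\bar x_k\}$, $\min\{x_k,\bar x_k\}$ (and first coordinate $x_1^0$). Since $\{x,\bar x\}$ is a $c$-splitting set it is $c$-cyclically monotone, giving $c(x)+c(\bar x)\le c(x^{+})+c(x^{-})$; a double-integral computation using only $\frac{\partial^2 c}{\partial x_i\partial x_j}\le 0$ and the nonnegativity of $x_i^{+}-x_i$ and $x_j-x_j^{-}$ gives the reverse inequality, so equality holds, and then the splitting inequality \eqref{Art3:6} forces equality at $x^{+}$ and $x^{-}$ as well, i.e.\ $x^{+},x^{-}\in M_{x_1^0k_1\dots k_r}$. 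Only now does the derivative argument close: Lemma \ref{Art3:8} gives $\partial_{x_1}c(x^{+})=Du_1(x_1^0)=\partial_{x_1}c(x^{-})$, and along the segment $r(t)=tx^{+}+(1-t)x^{-}$ \emph{every} coordinate difference $x_i^{+}-x_i^{-}$ is $\ge 0$, so each term $\frac{\partial^2 c}{\partial x_1\partial x_i}(r(t))(x_i^{+}-x_i^{-})$ is $\le 0$ and must vanish, yielding $x_i=\bar x_i$ for all $i$ adjacent to $1$. The iteration along paths through $P$ is then run with $x^{+},x^{-}\in M$ (not with $x,\bar x$ directly), using $\partial_{x_i}c(x^{+})=Du_i(x_i)=\partial_{x_i}c(x^{-})$ for $i\in P$ adjacent to an already-settled vertex. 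Your overall scaffolding (Lemma \ref{Art3:10}, Remark \ref{Art3:58}, graph connectivity via assumption (2)) matches the paper, but without the construction of $x^{\pm}$ and the proof that they lie in $M$, the sign argument at the heart of the proof does not go through.
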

\begin{proof}
Let $(u_1, \ldots, u_m)$ be an $m$-tuple of Borel functions satisfying inequality \eqref{Art3:6} and fix $x_{1}^{0}\in X_1$ such that $Du_1(x_{1}^{0})$ exists and $M_{x_{1}^{0}k_1\ldots k_r}\neq \emptyset $. We want to prove that $M_{x_{1}^{0}k_1\ldots k_r}$ is a singleton. This will complete the proof.\\
 Let $(x_2, \ldots, x_m), (\overline{x}_2, \ldots, \overline{x}_m)\in M_{x_{1}^{0}k_1\ldots k_r}$ and set $x=(x_{1}^{0}, x_2, \ldots, x_m)$ and $\overline{x}=(x_{1}^{0}, \overline{x}_2, \ldots, \overline{x}_m)$. Consider 
 $$x^{+}=(x_{1}^{0}, x_2^{+}, \ldots, x_m^{+})\quad \text{where}\quad x_k^{+}=\text{max}\{x_k, \overline{x}_k\},$$
  $$x^{-}=(x_{1}^{0}, x_2^{-}, \ldots, x_m^{-})\quad \text{where}\quad x_k^{-}=\text{min}\{x_k, \overline{x}_k\}.$$
From definition of $M_{x_{1}^{0}k_1\ldots k_r}$ the set $\{x, \overline{x}\}$ is a $c$-splitting set, so it is cyclically  monotone. Then
\begin{equation}\label{Art3:50}
c(x)+ c(\overline{x}) \leq c(x^{+}) + c(x^{-}).
\end{equation}
We claim that the reverse inequality also holds. To get this consider $x(t)=tx^{+} + (1-t)x$ and $y(t)=t\overline{x} + (1-t)x^{-}$ for $s\in [0,1]$. Next, write
\begin{align}\label{Art3:51}
c(x^{+}) - c(x) &= \int_{0}^{1}\dfrac{d}{dt}c(x(t))dt \nonumber\\
&=\int_{0}^{1}\sum_{i=2}^{m}\dfrac{\partial c(x(t))}{\partial x_i}(x_{i}^{+}-x_i)dt, \nonumber\\
\end{align}
and 
\begin{align}\label{Art3:52}
c(x^{-}) - c(\overline{x}) &= -\int_{0}^{1}\dfrac{d}{dt}c(y(t))dt \nonumber\\
&=-\int_{0}^{1}\sum_{i=2}^{m}\dfrac{\partial c(y(t))}{\partial x_i}(\overline{x}_i-x_i^{-})dt. \nonumber\\
\end{align}
Since for each $i\in\{2,\ldots, m\}$ we have
\begin{equation}\label{Art3:54}
x_{i}^{+}-x_i=\overline{x}_i-x_i^{-}= \begin{cases} 
      \overline{x}_{i}-x_i & \text{if}\quad \overline{x}_{i}>x_i \\
       0 & \overline{x}_{i}\leq x_i, \\
        \end{cases}
\end{equation}
the addition of \eqref{Art3:51} and \eqref{Art3:52} gives
\begin{equation}\label{Art3:55}
c(x^{+}) - c(x) + c(x^{-}) - c(\overline{x})= \int_{0}^{1}\sum_{i=2}^{m}\left[\dfrac{\partial c(x(t))}{\partial x_i}- \dfrac{\partial c(y(t))}{\partial x_i}\right](x_{i}^{+}-x_i)dt
\end{equation}
Now,  set $x(t,s)=sx(t) + (1-s)y(t)$, with $t\in [0,1]$ fixed. Then for each $i\in\{2,\ldots, m\}$ we have
\begin{align*}\label{Art3:53}
\dfrac{\partial c(x(t))}{\partial x_i}- \dfrac{\partial c(y(t))}{\partial x_i} &= \int_{0}^{1}\sum_{j=2}^{m}\dfrac{\partial^{2} c(x(t,s))}{\partial x_i \partial x_j}\left[tx_{j}^{+} + (1-t)x_j -\right( t\overline{x}_j + (1-t)x_{j}^{-}\left)\right]ds \nonumber\\
&=\int_{0}^{1}\sum_{j=2}^{m}\dfrac{\partial^{2} c(x(t,s))}{\partial x_i \partial x_j}\left[t(x_{j}^{+} -x_j) -t(\overline{x}_j - x_{j}^{-}) + x_j - x_j^{-}\right]ds \nonumber\\
&=\int_{0}^{1}\sum_{j=2}^{m}\dfrac{\partial^{2} c(x(t,s))}{\partial x_i \partial x_j}\left( x_j - x_j^{-}\right)ds. \qquad\qquad\qquad\quad \text{by \eqref{Art3:54}}\nonumber\\
\end{align*}
Substituting it into \eqref{Art3:55} we get 
\begin{equation*}
c(x^{+}) - c(x) + c(x^{-}) - c(\overline{x})=\int_{0}^{1}\int_{0}^{1}\sum_{i,j=2}^{m}\dfrac{\partial^{2} c(x(t,s))}{\partial x_i \partial x_j}\left( x_i^{+} - x_i\right)\left( x_j - x_j^{-}\right)dsdt.
\end{equation*}
Now note that $x_i^{+} - x_i, x_j - x_j^{-}\geq 0$, then by Assumption 1, $c(x^{+}) - c(x) + c(x^{-}) - c(\overline{x})\leq 0$. This implies that equality holds in \eqref{Art3:50}, completing the proof of the claim. Also, note that if one of the inequalities 
\begin{equation}\label{Art3:61}
u_{1}(x_1^{0})+\sum_{i=2}^{m}u_{i}(x_{i}^{+})\leq c(x^{+})
\end{equation}
\begin{equation}\label{Art3:62}
u_{1}(x_1^{0})+\sum_{i=2}^{m}u_{i}(x_{i}^{-})\leq c(x^{-})
\end{equation}
is strict, we would have
\begin{align}
2u_{1}(x_1^{0}) + \sum_{i=2}^{m}u_{i}(x_{i}^{+}) + \sum_{i=2}^{m}u_{i}(x_{i}^{-}) &< c(x^{+}) + c(x^{-}) \nonumber\\
&= c(x) +  c(\overline{x}) \nonumber\\
&=2u_{1}(x_1^{0}) + \sum_{i=2}^{m}u_{i}(x_{i}) + \sum_{i=2}^{m}u_{i}(\overline{x_{i}})\nonumber\\
&=2u_{1}(x_1^{0}) + \sum_{i=2}^{m}u_{i}(x_{i}^{+}) + \sum_{i=2}^{m}u_{i}(x_{i}^{-}),\nonumber\\
\end{align}
which is clearly not posible; that is, equality holds in \eqref{Art3:61} and \eqref{Art3:62}. Hence
\begin{equation}\label{Art3:56}
x^{+}, x^{-}\in M_{x_{1}^{0}k_1\ldots k_r}. 
\end{equation}

Furthermore, from Lemma \ref{Art3:8} we get
$$\dfrac{\partial c(x^{+})}{\partial x_1}=Du_{1}(x_{1}^{0})=\dfrac{\partial c(x^{-})}{\partial x_1},$$
or equivalently,
$$\int_{0}^{1}\sum_{i=2}^{m}\dfrac{\partial^{2} c(r(t))}{\partial x_1\partial x_i}(x_{i}^{+}-x_i^{-})dt=0,$$
where $r(t)= tx^{+} + (1-t)x^{-}$, $t\in [0,1]$. We then must have $$\dfrac{\partial^{2} c(r(t))}{\partial x_1\partial x_i}(x_{i}^{+}-x_i^{-})=0$$ for every $i\in \{2,\ldots, m\}$, as $\dfrac{\partial^{2} c(r(t))}{\partial x_1\partial x_i}(x_{i}^{+}-x_i^{-})\leq 0$ on $\{2,\ldots, m\}$. We next use Assumption 1 to deduce $x_{i}^{+}=x_{i}^{-}$ for all $i$ adjacent to $1$; that is,
\begin{equation}\label{Art3:57}
x_{i}=\overline{x}_i\;\;\text{for all  $i$ adjacent to $1$.}
\end{equation}
Now, if $1$ is adjacent to all the other vertices, the proof is completed. If there is a vertex not adjacent to $1$, then $1$ must be adjacent to some $i\in P$ (by Assumption 2), which implies $x_{i}=\overline{x}_i$ by \eqref{Art3:57}. Combining this with \eqref{Art3:56} and Lemma \ref{Art3:8} we get
$$\dfrac{\partial c(x^{+})}{\partial x_i}=Du_{i}(x_{i})=\dfrac{\partial c(x^{-})}{\partial x_i},$$ so we can mimic the arguments presented in the proof of \eqref{Art3:57} (beginning from \eqref{Art3:56}) to get $x_{j}=\overline{x}_j$ for every $j$ adjacent to $i$. Following this iterative process we can prove that $x_{j}=\overline{x}_j$ for every $j\in V(G)$, as Assumption 2 implies that every vertex of $V(G)$  is adjacent to at least one vertex in $P$, completing the proof of the proposition.
\end{proof}
\begin{remark}
Note that  if the graph $G$ is complete, we can take $P=\emptyset$ and Condition 1 basically means that $c$ is strictly sub-modular. Unique Monge type solutions for strictly sub-modular costs was established by Carlier \cite{C2010}.  It was observed in \cite{Pass0} that this condition is equivalent (up to a change of variables) to the compatibility condition, which states that $$\left(\dfrac{\partial^{2}c}{\partial x_{i}\partial x_j}\right)\left(\dfrac{\partial^{2}c}{\partial x_{k}\partial x_j}\right)^{-1}\left(\dfrac{\partial^{2}c}{\partial x_{k}\partial x_i}\right)<0$$ everywhere,  for all distinct $i,j,k$, and so compatible costs yield unique Monge solutions as well. 
\end{remark}

We can easily see that the next result is a generalization of a special case of Theorem 3.1 in \cite{PV2021}. Note that here we do not require $f$ being symmetric.
\par

\begin{proposition}\label{Art3:39}
Let $\{I_1, I_2, I_3\}$ be a partition of $\{1, \ldots, m\}$. Let $f:\mathbb{R}^{n}\times \mathbb{R}^{n}\mapsto \mathbb{R}$ be a function satisfying:
 \begin{enumerate}
 \item \label{Art3:36} $f$ is bi-linear,
 \item $f(x,x)\leq 0$ for every $x\in \mathbb{R}^{n}$,
 \item \label{Art3:46} $f$ is bi-twisted; that is, for each $x_0, y_0 \in \mathbb{R}^{n}$ fixed,  the maps $y\mapsto D_x f(x_0,y)$ and $x\mapsto D_y f(x,y_0)$ are injective on $\{x_0\}\times \mathbb{R}^{n}$ and $\mathbb{R}^{n} \times \{y_0\}$ respectively.
 \end{enumerate}

Assume $1\in I_1$ and fix $p\in I_2\cup I_3$, then the cost function 
\begin{equation}\label{Art3:26}
c(x_1, \ldots, x_m) = \sum_{s\in I_1}\sum_{t\in I_2\cup I_3}f(x_{s},x_{t}) +  \sum_{s\in I_3}\sum_{t\in I_2}f(x_{s},x_{t}) + \sum_{\underset{s<t }{s,t\in  I_3}}f(x_{s},x_{t})
\end{equation}
is twisted on $c$-splitting sets with respect to $x_1$ and $x_p$.
\end{proposition}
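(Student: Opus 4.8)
The plan is to follow the template established by Lemma \ref{Art3:10} and Remark \ref{Art3:58}: it suffices to fix an $m$-tuple $(u_1,\dots,u_m)$ of Borel functions satisfying \eqref{Art3:6}, fix $x_1^0 \in X_1$ with $Du_1(x_1^0)$ existing and $M_{x_1^0 p}(u_1,\dots,u_m) \neq \emptyset$, and show that $M_{x_1^0 p}$ is a singleton. So I would take two points $x = (x_1^0, x_2, \dots, x_m)$ and $\bar x = (x_1^0, \bar x_2, \dots, \bar x_m)$ in $M_{x_1^0 p}$, with $Du_p(x_p)$ and $Du_p(\bar x_p)$ both existing, and aim to conclude $x_i = \bar x_i$ for all $i$. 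By Lemma \ref{Art3:8}, $D_{x_1}c(x) = Du_1(x_1^0) = D_{x_1}c(\bar x)$ and $D_{x_p}c(x) = Du_p(x_p)$, $D_{x_p}c(\bar x) = Du_p(\bar x_p)$; moreover since $\{x,\bar x\}$ is a $c$-splitting set and hence $c$-cyclically monotone, any "swap" inequality among coordinate blocks is available.

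The key is to exploit the bilinear structure of $f$. Because $f$ is bilinear, $D_x f(\cdot,\cdot)$ and $D_y f(\cdot,\cdot)$ are each \emph{linear} in their free argument, so the bi-twist condition (\ref{Art3:46}) really says: the linear maps $z \mapsto D_x f(z, \cdot)$-type operators are invertible. Writing $c$ as in \eqref{Art3:26}, the partial derivative $D_{x_1}c$ at a point is (up to the linear pairing) a sum $\sum_{t \in I_2 \cup I_3} D_x f(x_1^0, x_t)$ over exactly those coordinates that $1 \in I_1$ is coupled to; by bilinearity this equals $D_x f\big(x_1^0, \sum_{t} x_t\big)$. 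From $D_{x_1}c(x) = D_{x_1}c(\bar x)$ and injectivity (hence, by linearity, invertibility) of $D_x f(x_1^0, \cdot)$, I would deduce $\sum_{t \in I_2 \cup I_3} x_t = \sum_{t \in I_2 \cup I_3} \bar x_t$. This is one linear relation among the "other-block" coordinates. To get the rest, I would bring in the $c$-cyclical monotonicity of $\{x, \bar x\}$ together with the bilinearity: swapping, say, the $I_1$-coordinates (including $x_1$) between $x$ and $\bar x$ produces an inequality whose quadratic part, after using $f(z,z)\le 0$ and the bilinear identity $f(a,a)+f(b,b)-f(a,b)-f(b,a) = -\,[\text{something involving } f(a-b, \cdot)]$, forces $f$ evaluated on the differences to vanish; combined with the derivative equality at $x_p$ and the bi-twist on the $p$-th variable, this pins down the remaining coordinate differences. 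Essentially one plays the linear relation coming from $D_{x_1}c$ against the one coming from $D_{x_p}c$ and the cyclical-monotonicity inequality, using that $p$ is chosen in $I_2 \cup I_3$ so that $x_p$ is coupled (through the $I_1$–$(I_2\cup I_3)$ and, if $p \in I_2$, the $I_3$–$I_2$ and, if $p\in I_3$, the $I_3$-internal terms) to the coordinates one still needs to control.

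I expect the main obstacle to be the bookkeeping of exactly which coordinate blocks are coupled in \eqref{Art3:26} and verifying that the two linear constraints ($D_{x_1}c$ equality and $D_{x_p}c$ equality) together with the one scalar cyclical-monotonicity inequality are enough to force $x = \bar x$ — in particular, making sure that the choice $1 \in I_1$ and $p \in I_2 \cup I_3$ is used in an essential way and that no coordinate escapes the argument. The bilinearity makes every "error term" either linear or a fixed quadratic form in the differences $x_i - \bar x_i$, so there should be no genuine analytic difficulty; the work is in setting up the right swap in the cyclical monotonicity inequality (which blocks to permute) so that the surviving quadratic form is sign-definite by conditions (2) and (\ref{Art3:36}), and then invoking bi-twist (\ref{Art3:46}) twice — once in the $x$-variable via $x_1^0$, once in the $y$-variable via $x_p$ — to upgrade "the bilinear form vanishes on the differences" to "the differences are zero." Once $M_{x_1^0 p}$ is shown to be a singleton, Lemma \ref{Art3:10} and Theorem \ref{Main Theorem} give the conclusion, though here we only need the singleton statement as in Remark \ref{Art3:58}.
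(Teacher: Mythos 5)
Your setup is the same as the paper's: reduce via Remark \ref{Art3:58} to showing $M_{x_1^0p}$ is a singleton, apply Lemma \ref{Art3:8} at $x_1^0$, and use bilinearity plus the twist of $f$ in its first argument to obtain the block-sum identity $\sum_{t\in I_2\cup I_3}x_t=\sum_{t\in I_2\cup I_3}\bar x_t$. From that point on, however, your sketch has a genuine gap. You propose that one block swap in the cyclical-monotonicity inequality, together with the two derivative equalities at $x_1$ and $x_p$, will ``pin down the remaining coordinate differences,'' and you expect only bookkeeping obstacles. But the derivative equalities, via the twist, only ever give you equalities of \emph{block sums} (first $\sum_{I_2\cup I_3}$, later $\sum_{I_1}$), never individual coordinates, and a single scalar monotonicity inequality cannot separate the coordinates inside a block when $|I_2|$, $|I_3|$ or $|I_1|$ exceeds one. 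Moreover, your proposed upgrade from ``the bilinear form vanishes on the differences'' to ``the differences are zero'' does not work as stated: hypothesis (2) only says $f(z,z)\le 0$, so $f(\Delta,\Delta)=0$ does \emph{not} force $\Delta=0$ (take $f$ antisymmetric, which is bi-twisted when nondegenerate and has $f(z,z)\equiv 0$). The bi-twist is an injectivity statement about first derivatives, not a nondegeneracy statement about the quadratic form on the diagonal, so invoking it ``twice'' cannot close this step.

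What the paper actually does, and what your sketch is missing, is an iterative equality-case mechanism: for each single coordinate $r\in I_2$ one writes the two Argmin (swap) inequalities, sums over all $r\in I_2$ and uses the block-sum identity to produce $f\bigl(\sum_{s\in I_2}(x_s^2-x_s^1),\sum_{r\in I_2}(x_r^2-x_r^1)\bigr)\ge 0$; hypothesis (2) then forces \emph{equality in each original swap inequality}, which shows the swapped point again lies in $M_{x_1^0p}$, and only then does Lemma \ref{Art3:8} at the swapped point, combined with the twist in the first argument of $f$ at $x_1^0$, give $x_r^1=x_r^2$ coordinate by coordinate. The same loop is run by induction over $I_3$ (where the internal $I_3$ couplings matter), and the $I_1$ coordinates require a separate device: a ``mixing'' claim showing that replacing any subset of the $I_1$ coordinates of one point by those of the other stays in $M_{x_1^0p}$ (this is where the already-established identity $\sum_{I_2\cup I_3}x_t=\sum_{I_2\cup I_3}\bar x_t$ is used), after which the equality $Du_p(x_p^1)=Du_p(x_p^2)$ --- available precisely because $p\in I_2\cup I_3$ and $u_p$ is differentiable on $M_{x_1^0p}$ --- yields $\sum_{I_1}x_t^1=\sum_{I_1}x_t^2$ and, applied to the mixed points, the individual equalities in $I_1$. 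Without this ``equality implies the swapped point is in $M$, then reapply Lemma \ref{Art3:8} and the twist'' loop and the mixing argument, your plan cannot recover the individual coordinates, so the proof as proposed does not go through.
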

\begin{proof}
Firstly, by hypothesis 1 we can write 
\begin{flalign}\label{Art3:37}
c(x_1, \ldots, x_m)&=f(\sum_{s\in I_1}x_{s},\sum_{t\in I_2\cup I_3}x_{t}) + f(\sum_{s\in I_3}x_{s},\sum_{t\in I_2}x_{t}) + \sum_{\underset{s<t }{s,t\in  I_3}}f(x_{s},x_{t}).
\end{flalign}
Let $(u_1, \ldots, u_m)$ be an $m$-tuple of Borel functions satisfying inequality \eqref{Art3:6} and fix $x_1^{0}\in \{x_1\in X_1:Du_1(x_1)\;\; \text{exists}\}$, with $M_{x_1^{0}p}(u_1, \ldots, u_m)\neq \emptyset$. We want to prove that $M_{x_1^{0}p}$ is a singleton, this will complete the proof.\\
Let $(x_2^{1}, \ldots, x_m^{1}), (x_2^{2}, \ldots, x_m^{2})\in M_{x_1^{0}p}$. Since $\{I_1,I_2,I_3\}$ is a partition and $1\in I_1$, we get from  Lemma \ref{Art3:8} and \eqref{Art3:37},
\begin{flalign*}
D_{x_{1}}f(x_1^{0}, \sum_{t\in I_2\cup I_3}x_t^{1})& =D_{x_1}c(x_1^{0},x_2^{1}, \ldots, x_m^{1})\nonumber\\
&=Du_{1}(x_{1}^{0})\qquad\qquad\qquad\qquad \nonumber\\
&=D_{x_1}c(x_1^{0},x_2^{2}, \ldots, x_m^{2})\nonumber\\
  &=D_{x_{1}}f(x_1^{0}, \sum_{t\in I_2\cup I_3}x_t^{2}).\nonumber\\
\end{flalign*}
It follows that 
\begin{equation}\label{Art3:23}
\sum_{t\in I_2\cup I_3}x_t^{1}= \sum_{t\in I_2\cup I_3}x_t^{2},
\end{equation}
by Assumption \ref{Art3:46}.
\begin{claim}
For every $N\subseteq I_1$ we get $y_{N}:=(y_2, \ldots, y_m)\in M_{x_1^{0}p}$, where 
\begin{equation}\label{Art3:34}
y_{s}= \begin{cases} 
      x_{s}^{2} & \text{if}\quad s\in \{2, \ldots, m\}\setminus N \\
      x_{s}^{1} & \text{if}\quad  s\in N. \\

   \end{cases}
   \end{equation}
\end{claim}
\par
\begin{flushleft}
\textit{Proof of Claim 1.} Note that from \eqref{Art3:37}, we can write 
\end{flushleft}
\begin{flalign}
c(x_1, \ldots, x_m)&=f(\sum_{s\in N}x_{s},\sum_{t\in I_2\cup I_3}x_{t}) + f(\sum_{s\in I_1\setminus N}x_{s},\sum_{t\in I_2\cup I_3}x_{t})  + f(\sum_{s\in I_3}x_{s},\sum_{t\in I_2}x_{t}) + \sum_{\underset{s<t }{s,t\in  I_3}}f(x_{s},x_{t}).
\end{flalign}
Since $(x_{2}^{1}, \ldots, x_m^{1})\in M_{x_1^{0}p}$, we get
\begin{align*}
\left\lbrace x_{s}^{1}\right\rbrace_{s\in N}&\in\text{Argmin}\Bigg\{\left\lbrace x_{s}\right\rbrace_{s\in N} \mapsto f(\sum_{s\in N}x_{s},\sum_{t\in I_2\cup I_3}x_{t}^{1})-\sum_{s\in N}u_{s}(x_{s}) + f(\sum_{s\in I_1\setminus N}x_{s}^{1},\sum_{t\in I_2\cup I_3}x_{t}^{1}) \\ &
\qquad\qquad\qquad   + f(\sum_{s\in I_3}x_{s}^{1},\sum_{t\in I_2}x_{t}^{1}) + \sum_{\underset{s<t }{s,t\in  I_3}}f(x_{s}^{1},x_{t}^{1}) -\sum_{s\in \{1, \ldots, m\} \setminus N}u_{s}(x_{s}^{1})\Bigg\}\\
&=\text{Argmin}\Bigg\{\left\lbrace x_{s}\right\rbrace_{s\in N} \mapsto f(\sum_{s\in N}x_{s},\sum_{t\in I_2\cup I_3}x_{t}^{1})-\sum_{s\in N}u_{s}(x_{s}) \Bigg\}\\
&=\text{Argmin}\Bigg\{\left\lbrace x_{s}\right\rbrace_{s\in N} \mapsto f(\sum_{s\in N}x_{s},\sum_{t\in I_2\cup I_3}x_{t}^{2})-\sum_{s\in N}u_{s}(x_{s}) \Bigg\},
\end{align*}
by \eqref{Art3:23}. We deduce $y_N\in M_{x_1^{0}p}$, as $(x_2^{2}, \ldots, x_{m}^{2})\in M_{x_1^{0}p}$. This complete the proof of Claim 1.
\begin{claim}
$x_s^{1}=x_s^{2}$ for every $s\in I_2$.
\end{claim}
\begin{flushleft}
\textit{Proof of Claim 2.} From Claim 1, $(y_2, \ldots,y_m)\in M_{x_1^{0}p}$ where 
\end{flushleft}
\begin{equation}\label{Art3:35}
y_{s}= \begin{cases} 
      x_{s}^{2} & \text{if}\quad s\in \{2, \ldots, m\}\setminus I_1 \\
      x_{s}^{1} & \text{if}\quad  s\in I_1. \\
   \end{cases}
   \end{equation}
 Then, by fixing $r\in I_2$ we get
\begin{flalign*}
x_{r}^{1}&\in\text{Argmin}\Bigg\{x_{r} \mapsto c(x_1^{0}, x_2^{1}, \ldots, x_{r-1}^{1},x_{r},x_{r+1}^{1}, \ldots, x_m^{1}) - u_{r}(x_{r})\Bigg\},
\end{flalign*} 
\begin{flalign*}
y_{r}=x_{r}^{2}&\in\text{Argmin}\Bigg\{x_{r} \mapsto c(x_1^{0}, y_2, \ldots, y_{r-1},x_{r},y_{r+1}, \ldots, y_m) - u_{r}(x_{r})\Bigg\}.
\end{flalign*}
Then
\begin{equation}\label{Art3:24}
c(x_1^{0}, x_2^{1}, \ldots, x_m^{1}) - u_{r}(x_{r}^{1})\leq c(x_1^{0}, x_2^{1}, \ldots, x_{r-1}^{1},x_{r}^{2},x_{r+1}^{1}, \ldots, x_m^{1}) - u_{r}(x_{r}^{2}),
\end{equation}
\begin{equation}\label{Art3:25}
c(x_1^{0}, y_2, \ldots, y_m) - u_{r}(x_{r}^{2})\leq c(x_1^{0}, y_2, \ldots, y_{r-1},x_{r}^{1},y_{r+1}, \ldots, y_m) - u_{r}(x_{r}^{1}),
\end{equation}
which implies 
\begin{flalign}\label{Art3:22}
c(x_1^{0}, x_2^{1}, \ldots, x_m^{1}) + c(x_1^{0}, y_2, \ldots, y_m) &\leq c(x_1^{0}, x_2^{1}, \ldots, x_{r-1}^{1},x_{r}^{2},x_{r+1}^{1}, \ldots, x_m^{1})\nonumber \\
&+  c(x_1^{0}, y_2, \ldots, y_{r-1},x_{r}^{1},y_{r+1}, \ldots, y_m).
\end{flalign}
Now, from bi-linearity of $f$ we can write   
\begin{flalign*}
c(x_1, \ldots, x_m)&=f(\sum_{s\in I_1}x_{s},\sum_{t\in I_2\cup I_3}x_{t}) + f(\sum_{s\in I_3}x_{s},\sum_{t\in I_2}x_{t}) + \sum_{\underset{s<t }{s,t\in  I_3}}f(x_{s},x_{t})\nonumber\\
&= f(\sum_{s\in I_1\cup I_3}x_{s},\sum_{t\in I_2}x_{t}) +  f(\sum_{s\in I_1}x_{s},\sum_{t\in I_3}x_{t}) + \sum_{\underset{s<t }{s,t\in  I_3}}f(x_{s},x_{t})\nonumber\\
&=f(\sum_{s\in I_1\cup I_3}x_{s},x_r) + f(\sum_{s\in I_1\cup I_3}x_{s},\sum_{t\in I_2\setminus \{r\} }x_{t}) +  f(\sum_{s\in I_1}x_{s},\sum_{t\in I_3}x_{t}) + \sum_{\underset{s<t }{s,t\in  I_3}}f(x_{s},x_{t}).
\end{flalign*}
Combining this with \eqref{Art3:22} we get
\begin{flalign*}
f(\sum_{s\in I_1\cup I_3}x_{s}^1,x_r^1) + f(\sum_{s\in I_1\cup I_3}x_{s}^1,\sum_{t\in I_2\setminus \{r\} }x_{t}^1) +  f(\sum_{s\in I_1}x_{s}^1,\sum_{t\in I_3}x_{t}^1) + \sum_{\underset{s<t }{s,t\in  I_3}}f(x_{s}^1,x_{t}^1)\\
+ f(\sum_{s\in I_1\cup I_3}y_{s},y_r) + f(\sum_{s\in I_1\cup I_3}y_{s},\sum_{t\in I_2\setminus \{r\} }y_{t}) +  f(\sum_{s\in I_1}y_{s},\sum_{t\in I_3}y_{t}) + \sum_{\underset{s<t }{s,t\in  I_3}}f(y_{s},y_{t})\\
\leq f(\sum_{s\in I_1\cup I_3}x_{s}^1,x_r^2) + f(\sum_{s\in I_1\cup I_3}x_{s}^1,\sum_{t\in I_2\setminus \{r\} }x_{t}^1) +  f(\sum_{s\in I_1}x_{s}^1,\sum_{t\in I_3}x_{t}^1) + \sum_{\underset{s<t }{s,t\in  I_3}}f(x_{s}^1,x_{t}^1)\\
 + f(\sum_{s\in I_1\cup I_3}y_{s},x_r^{1}) + f(\sum_{s\in I_1\cup I_3}y_{s},\sum_{t\in I_2\setminus \{r\} }y_{t}) +  f(\sum_{s\in I_1}y_{s},\sum_{t\in I_3}y_{t}) + \sum_{\underset{s<t }{s,t\in  I_3}}f(y_{s},y_{t}).
\end{flalign*}
Then, the above inequality reduces to 
\begin{flalign}\label{Art3:26}
f(\sum_{s\in I_1\cup I_3}x_{s}^1,x_r^1) + f(\sum_{s\in I_1\cup I_3}y_{s},y_r)
\leq f(\sum_{s\in I_1\cup I_3}x_{s}^1,x_r^2) + f(\sum_{s\in I_1\cup I_3}y_{s},x_r^{1}).
\end{flalign}
By construction of $y$ and linearity we have
$$f(\sum_{s\in I_1\cup I_3}y_{s},y_r)=f(\sum_{s\in I_1}y_{s},y_r)+f(\sum_{s\in  I_3}y_{s},y_r)=f(\sum_{s\in I_1}x_{s}^1,x_r^{2})+f(\sum_{s\in  I_3}x_{s}^{2},x_r^{2}),$$
$$f(\sum_{s\in I_1\cup I_3}y_{s},x_r^{1})=f(\sum_{s\in I_1}y_{s},x_r^{1})+f(\sum_{s\in  I_3}y_{s},x_r^{1})=f(\sum_{s\in I_1}x_{s}^1,x_r^{1})+f(\sum_{s\in  I_3}x_{s}^{2},x_r^{1}).$$
Substituting it into \eqref{Art3:26} and eliminating similar terms we get
$$f(\sum_{s\in I_3}x_{s}^1,x_r^1) + f(\sum_{s\in I_3}x_{s}^{2},x_r^{2})
\leq f(\sum_{s\in I_3}x_{s}^1,x_r^2) + f(\sum_{s\in I_3}x_{s}^{2},x_r^{1}),$$
and by \eqref{Art3:23}, we get
$$f(\sum_{s\in I_2}(x_{s}^2-x_{s}^1),x_r^1-x_r^2)=f(\sum_{s\in I_3}(x_{s}^1-x_{s}^2),x_r^1-x_r^2)\leq 0;$$
 that is,  
 $$f(\sum_{s\in I_2}(x_{s}^2-x_{s}^1),x_r^2-x_r^1)\geq 0.$$
 Summing over $r\in I_2$ we get
 $$f(\sum_{s\in I_2}(x_{s}^2-x_{s}^1),\sum_{r\in I_2}(x_{r}^2-x_{r}^1))\geq 0.$$
 Combining this with hypothesis  2 we get $f(\sum_{s\in I_2}(x_{s}^2-x_{s}^1),\sum_{r\in I_2}(x_{r}^2-x_{r}^1))= 0$. Then, we must have, in particular, equality in \eqref{Art3:24}. It follows that $(x_2^{1}, \ldots, x_{r-1}^{1},x_{r}^{2},x_{r+1}^{1}, \ldots, x_m^{1})\in M_{x_1^{0}p}$, and so
\begin{flalign*}
D_{x_{1}}f(x_1^{0}, \sum_{t\in I_2\cup I_3}x_t^{1})& =D_{x_1}c(x_1^{0},x_2^{1}, \ldots, x_m^{1})\nonumber\\
&=Du_{1}(x_{1}^{0})  \nonumber\\
&=D_{x_1}c(x_1^{0}, x_2^{1}, \ldots, x_{r-1}^{1},x_{r}^{2},x_{r+1}^{1}, \ldots, x_m^{1})\nonumber\\
  &=D_{x_{1}}f(x_1^{0}, x_{r}^{2} + \sum_{t\in I_2\cup I_3\setminus \{r\}}x_t^{1}).\nonumber\\
\end{flalign*}
Thus, $x_{r}^{1}=x_{r}^{2}$, as $f$ is twisted. This completes the proof of Claim 2.
\begin{claim}
For every $n$, equation \eqref{Art3:23} implies $x_{t_j}^{1}=x_{t_j}^{2}$ for $1\leq j\leq n$, where $I_3:= \{t_1, \ldots, t_n\}$.
\end{claim}
\textit{Proof of Claim 3.}
From Claim 2 and \eqref{Art3:23}  we get
\begin{equation}\label{Art3:47}
\sum_{t\in I_3}x_t^{1}= \sum_{t\in  I_3}x_t^{2}.
\end{equation}
We proceed to apply induction on $n$. Indeed, when $n=1$ it is clearly true. Assume the statement is true when $n=k-1$. Note that
$$f(\sum_{s\in I_1}x_{s},\sum_{t\in I_3}x_{t})=f(\sum_{s\in I_1}x_{s},x_{t_k})+ f(\sum_{s\in I_1}x_{s},\sum_{t\in I_3\setminus \{t_k\}}x_{t}),$$
$$f(\sum_{s\in I_3}x_{s},\sum_{t\in I_2}x_{t})= f(x_{t_k},\sum_{t\in I_2}x_{t})+f(\sum_{s\in I_3\setminus \{t_k\}}x_{s},\sum_{t\in I_2}x_{t}),$$
$$\sum_{\underset{s<t }{s,t\in  I_3}}f(x_{s},x_{t})=\sum_{s\in  I_3\setminus \{t_k\}}f(x_{s},x_{t_k}) + \sum_{\underset{s<t }{s,t\in  I_3\setminus \{t_k\}}}f(x_{s},x_{t}).$$
Hence,
\begin{flalign*}
c(x_1, \ldots, x_m)& =f(\sum_{s\in I_1}x_{s},\sum_{t\in I_2\cup I_3}x_{t}) + f(\sum_{s\in I_3}x_{s},\sum_{t\in I_2}x_{t}) + \sum_{\underset{s<t }{s,t\in  I_3}}f(x_{s},x_{t})\\
&=f(\sum_{s\in I_1}x_{s},\sum_{t\in I_2}x_{t}) + f(\sum_{s\in I_1}x_{s},\sum_{t\in I_3}x_{t}) + f(\sum_{s\in I_3}x_{s},\sum_{t\in I_2}x_{t}) + \sum_{\underset{s<t }{s,t\in  I_3}}f(x_{s},x_{t})\nonumber\\
&=f(\sum_{s\in I_1}x_{s},\sum_{t\in I_2}x_{t}) +f(\sum_{s\in I_1}x_{s},x_{t_k})+ f(\sum_{s\in I_1}x_{s},\sum_{t\in I_3\setminus \{t_k\}}x_{t})+f(x_{t_k},\sum_{t\in I_2}x_{t}) \nonumber\\
&+ f(\sum_{s\in I_3\setminus \{t_k\}}x_{s},\sum_{t\in I_2}x_{t}) + \sum_{s\in  I_3\setminus \{t_k\}}f(x_{s},x_{t_k}) + \sum_{\underset{s<t }{s,t\in  I_3\setminus \{t_k\}}}f(x_{s},x_{t}).\nonumber\\
\end{flalign*}
Since the only terms of $c$ depending on $x_{t_k}$ are $f(\sum_{s\in I_1}x_{s},x_{t_k}), f(x_{t_k},\sum_{t\in I_2}x_{t}) $ and $\sum_{s\in  I_3\setminus \{t_k\}}f(x_{s},x_{t_k})$, we get
\begin{flalign*}
x_{t_k}^{1}&\in\text{Argmin}\Bigg\{x_{t_k} \mapsto f(\sum_{s\in I_1}x_{s}^{1},x_{t_k})+ f(x_{t_k},\sum_{t\in I_2}x_{t}^{1}) + \sum_{s\in  I_3\setminus \{t_k\}}f(x_{s}^{1},x_{t_k})- u_{t_k}(x_{t_k})\Bigg\}.
\end{flalign*} 
Furthermore, defining $y$ as in \eqref{Art3:35} we get
\begin{flalign*}
y_{t_k}=x_{t_k}^{2}&\in\text{Argmin}\Bigg\{x_{t_k} \mapsto f(\sum_{s\in I_1}y_{s},x_{t_k})+ f(x_{t_k},\sum_{t\in I_2}y_{t}) + \sum_{s\in  I_3\setminus \{t_k\}}f(y_{s},x_{t_k})- u_{t_k}(x_{t_k})\Bigg\}.
\end{flalign*} 
We deduce 
\begin{flalign}\label{Art3:48}
f(\sum_{s\in I_1}x_{s}^{1},x_{t_k}^{1})+ f(x_{t_k}^{1},\sum_{t\in I_2}x_{t}^{1}) + \sum_{s\in  I_3\setminus \{t_k\}}f(x_{s}^{1},x_{t_k}^{1})- u_{t_k}(x_{t_k}^{1})\nonumber\\
 \leq f(\sum_{s\in I_1}x_{s}^{1},x_{t_k}^{2})+ f(x_{t_k}^{2},\sum_{t\in I_2}x_{t}^{1}) + \sum_{s\in  I_3\setminus \{t_k\}}f(x_{s}^{1},x_{t_k}^{2})- u_{t_k}(x_{t_k}^{2}),
\end{flalign}
\begin{flalign*}
f(\sum_{s\in I_1}y_{s},x_{t_k}^{2})+ f(x_{t_k}^{2},\sum_{t\in I_2}y_{t}) + \sum_{s\in  I_3\setminus \{t_k\}}f(y_{s},x_{t_k}^{2})- u_{t_k}(x_{t_k}^{2})\\
 \leq f(\sum_{s\in I_1}y_{s},x_{t_k}^{1})+ f(x_{t_k}^{1},\sum_{t\in I_2}y_{t}) + \sum_{s\in  I_3\setminus \{t_k\}}f(y_{s},x_{t_k}^{1})- u_{t_k}(x_{t_k}^{1}).
\end{flalign*}
Adding the above inequalities,  using Claim 2 and construction of $y$ we get
$$\sum_{s\in  I_3\setminus \{t_k\}}f(x_{s}^{1},x_{t_k}^{1}) +\sum_{s\in  I_3\setminus \{t_k\}}f(x_{s}^{2},x_{t_k}^{2})\leq \sum_{s\in  I_3\setminus \{t_k\}}f(x_{s}^{1},x_{t_k}^{2}) +\sum_{s\in  I_3\setminus \{t_k\}}f(x_{s}^{2},x_{t_k}^{1}).$$
Combining this with \eqref{Art3:47} we get
 $$f(x_{t_k}^{2}-x_{t_k}^{1},x_{t_k}^{2}-x_{t_k}^{1})=f(\sum_{s\in I_3\setminus \{t_k\}}(x_{s}^1-x_{s}^2),x_{t_k}^2-x_{t_k}^1)\geq 0.$$
 From hypothesis 2 we then get $f(x_{t_k}^{2}-x_{t_k}^{1},x_{t_k}^{2}-x_{t_k}^{1})=0$. Hence, equality holds in \eqref{Art3:48} and $(x_2^{1}, \ldots, x_{t_{k}-1}^{1},x_{t_k}^{2},x_{t_k+1}^{1}, \ldots, x_m^{1})\in M_{x_1^{0}p}$. This implies
\begin{flalign*}
D_{x_{1}}f(x_1^{0}, \sum_{t\in I_2\cup I_3}x_t^{1})& =D_{x_1}c(x_1^{0},x_2^{1}, \ldots, x_m^{1})\nonumber\\
&=Du_{1}(x_{1}^{0})  \nonumber\\
&=D_{x_1}c(x_1^{0}, x_2^{1}, \ldots, x_{t_k-1}^{1},x_{t_k}^{2},x_{t_k+1}^{1}, \ldots, x_m^{1})\nonumber\\
  &=D_{x_{1}}f(x_1^{0}, x_{t_k}^{2} + \sum_{t\in I_2\cup I_3\setminus \{t_k\}}x_t^{1}).\nonumber\\
\end{flalign*}
Thus, $x_{t_k}^{1}=x_{t_k}^{2}$, as $f$ is twisted. Hence, from \eqref{Art3:47} and Claim 2 we can write $\sum_{t\in I_2\cup I_3\setminus \{t_k\}}x_t^{1}= \sum_{t\in I_2\cup I_3\setminus \{t_k\}}x_t^{2}$. It follows that $x_{t_2}^{1}=x_{t_2}^{2}, \ldots, x_{t_{k-1}}^{1}=x_{t_{k-1}}^{2}$, by induction hypothesis. This completes the proof of Claim 3. 
\begin{claim}
$x_{s}^{1}=x_{s}^{2}$ for every $s\in I_1$.
\end{claim}
\textit{Proof of Claim 3.} Since $p\in I_2\cup I_3$, $x_p^1=x_p^2$ by Claim 2 and 3. Hence,
\begin{flalign*}
 D_{x_p}c(x_1^{0},x_2^{1}, \ldots, x_m^{1})& =Du_{p}(x_{p}^{1})\nonumber\\
&=Du_{p}(x_{p}^{2})  \nonumber\\
&= D_{x_p}c(x_1^{0},x_2^{2}, \ldots, x_m^{2}).\nonumber\\
\end{flalign*}
Combining the above equality, Claim 2 and 3,  and  \eqref{Art3:37} we get  $D_{x_{p}}f(\sum_{t\in I_1}x_t^{1}, x_p^{1})=D_{x_{p}}f(\sum_{t\in I_1}x_t^{2},  x_p^{2})$. It follows that 

\begin{equation}\label{Art3:27}
\sum_{t\in I_1}x_t^{1}= \sum_{t\in I_1}x_t^{2}.
\end{equation}
Now, fix $t\in I_1\setminus \{1\}$. Setting $N=\{t\}$, we use Claim 1 to get $y_N=(x_2^{2}, \ldots, x_{t-1}^{2}, x_{t}^{1}, x_{t+1}^{2},\ldots, x_m^{2})\in M_{x_1^{0}p}$. Since \eqref{Art3:27} holds true for every $(x_2^{1}, \ldots, x_m^{1}), (x_2^{2}, \ldots, x_m^{2})\in M_{1p}$, in particular, it is true for $y_N$ and $(x_2^{2}, \ldots, x_m^{2})$. It immediately implies $x_t^{1}=x_t^{2}$, completing the proof of Claim 4.
\par
This completes the proof of the Proposition.
\end{proof}
The next result focuses on a cost with a cycle structure that generalizes the main result in \cite{PV2020}.
  \begin{proposition}\label{Art3:41}
Consider 
\begin{equation}\label{Aert3:38}
c(x_{1},x_{2},x_{3},x_{4})= c_{1}(x_{1},x_{2})+c_{2}(x_{2},x_{3})+c_{3}(x_{3},x_{4})+c_{4}(x_{4},x_{1}),
\end{equation}
with $c_{i}$ semi-concave for each $i=1,2,3,4$. 
Assume
\begin{enumerate}
\item \label{Ass:1} For every 4-tuple of Borel functions  $(u_{1},u_2,u_3, u_{4})$ satisfying inequality \eqref{Art3:6} and $x_1^{0} \in  \left\lbrace x_1\in X_1: Du_1(x_1)\; \text{exists}\right\rbrace$, we get 
\begin{equation}
c_{2}(x_{2}^{1},x_{3}^{1}) + c_{3}(x_{3}^{1},x_{4}^{1}) + c_{2}(x_{2}^{2},x_{3}^{2}) + c_{3}(x_{3}^{2},x_{4}^{2})\geq c_{2}(x_{2}^{1},x_{3}^{2}) + c_{3}(x_{3}^{2},x_{4}^{1}) + c_{2}(x_{2}^{2},x_{3}^{1}) + c_{3}(x_{3}^{1},x_{4}^{2}),
\end{equation}
 for every $ (x_{2}^{1}, x_{3}^{1},x_{4}^{1}), (x_{2}^{2},x_{3}^{2},x_{4}^{2})\in M_{x_1^{0}4}$.
\item \label{Ass:2} $c_3$ is twisted with respect to $x_4$; that is, for every $x_4$ fixed the map $x_{3}\mapsto D_{x_{4}}c_{3}(x_{3},x_{4})$ is injective on the subset of $X_3\times \{x_4\}$ where $c_3$ is differentiable with respect to $x_4$. 
\item \label{Ass:3}   $c_1$ and $c_4$ are twisted with respect to $x_1$ respectively; that is, for every $x_1$ fixed the maps $x_{2}\mapsto D_{x_{1}}c_{1}(x_{1},x_{2})$ and  $x_{4}\mapsto D_{x_{1}}c_{4}(x_{4},x_{1})$ are  injective on the subsets of $\{x_1\}\times X_2 $ and $ X_4\times \{x_1\}$ where $c_1$ and $c_4$ are differentiable with respect to $x_1$ respectively. 
\end{enumerate}
Then, $c$ is twisted on $c$-splitting sets with respect to $x_1$ and $x_4$.
\begin{proof}
 Let $(u_{1},u_2,u_3, u_{4})$ be a 4-tuple of Borel functions satisfying inequality \eqref{Art3:6}. Fix  $x_1^{0} \in  \left\lbrace x_1\in X_1: Du_1(x_1)\; \text{exists}\right\rbrace$ and let $(x_2^{1}, x_3^{1}, x_4^{1}), (x_2^{2}, x_3^{2}, x_4^{2})\in M_{x_1^{0}4}$. We want to show  $x_i^{1}=x_i^{2}$, $i=2,3,4$. For this, observe that
$$(x_{3}^{k},x_4^{k})\in  \text{Argmin}\Big\{(x_{3},x_4) \mapsto c(x_1^{0},x_2^{k},x_3,x_4)-u_{3}(x_{3})-u_{4}(x_{4})\Big\},\; k=1,2.$$ 
Then
\begin{equation}\label{Art3:4}
c(x_1^{0},x_2^{1},x_3^{1},x_4^{1})-u_{3}(x_{3}^{1})-u_{4}(x_{4}^{1})\leq c(x_1^{0},x_2^{1},x_3^{2},x_4^{1})-u_{3}(x_{3}^{2})-u_{4}(x_{4}^{1}),
\end{equation}
\begin{equation}\label{Art3:5}
c(x_1^{0},x_2^{2},x_3^{2},x_4^{2})-u_{3}(x_{3}^{2})-u_{4}(x_{4}^{2})\leq c(x_1^{0},x_2^{2},x_3^{1},x_4^{2})-u_{3}(x_{3}^{1})-u_{4}(x_{4}^{2}).
\end{equation}
Adding the above inequalities and eliminating similar terms we get
\begin{equation}\label{Art3:40}
c_{2}(x_{2}^{1},x_{3}^{1}) + c_{3}(x_{3}^{1},x_{4}^{1}) + c_{2}(x_{2}^{2},x_{3}^{2}) + c_{3}(x_{3}^{2},x_{4}^{2})\leq c_{2}(x_{2}^{1},x_{3}^{2}) + c_{3}(x_{3}^{2},x_{4}^{1}) + c_{2}(x_{2}^{2},x_{3}^{1}) + c_{3}(x_{3}^{1},x_{4}^{2}).
\end{equation}
By Assumption \ref{Ass:1}, the above inequality is in fact equality, which implies that we must have equality in \eqref{Art3:4} and \eqref{Art3:5}. In particular, $(x_2^{1},x_3^{2},x_4^{1}) \in  M_{x_1^{0}4}$, so
 by Lemma \ref{Art3:8} we get $$D_{x_{4}}c(x_{1}^{0},x_2^{1}, x_3^{2}, x_4^{1})= Du_{4}(x_{4}^{1})=D_{x_{4}}c(x_{1}^{0},x_2^{1}, x_3^{1}, x_4^{1}),$$ or equivalently, 
$$D_{x_4}c_{3}(x_3^{2},x_4^{1}) + D_{x_4}c_{4}(x_4^{1},x_1^{0})=Du_{4}(x_{4}^{1})=D_{x_4}c_{3}(x_3^{1},x_4^{1}) + D_{x_4}c_{4}(x_4^{1},x_1^{0}).$$
The above equalities gives $D_{x_4}c_{3}(x_3^{2},x_4^{1})=D_{x_4}c_{3}(x_3^{1},x_4^{1})$, and by Assumption \ref{Ass:2}, $x_3^{1}=x_3^{2}$. Now, note that
\begin{align*}
x_4^{1}&\in \text{Argmin}\left\lbrace x_{4}\mapsto c(x_1^{0},x_2^{1},x_3^{1},x_4)-u_{4}(x_{4})\right\rbrace\\
&=\text{Argmin}\left\lbrace x_{4}\mapsto c_3(x_3^{1},x_4) + c_4(x_4, x_1^{0}) -u_{4}(x_{4})\right\rbrace\\
&=\text{Argmin}\left\lbrace x_{4}\mapsto c_3(x_3^{2},x_4) + c_4(x_4, x_1^{0}) -u_{4}(x_{4})\right\rbrace\\
&=\text{Argmin}\left\lbrace x_{4}\mapsto c(x_1^{0},x_2^{2},x_3^{2},x_4)-u_{4}(x_{4})\right\rbrace,
\end{align*}
as $(x_2^{2}, x_3^{2}, x_4^{2})\in M_{x_1^{0}4}$. Hence, $(x_1^{0},x_2^{2},x_3^{1},x_4^{1})=(x_1^{0},x_2^{2},x_3^{2},x_4^{1})\in M_{x_1^{0}4}$, and by Lemma \ref{Art3:8} we get $D_{x_{1}}c(x_1^{0},x_2^{2},x_3^{1},x_4^{1})= Du_{1}(x_{1}^{0})=D_{x_{1}}c(x_{1}^{0},x_2^{1}, x_3^{1}, x_4^{1})$; that is, 
$$D_{x_1}c_{1}(x_1^{0},x_2^{2}) + D_{x_1}c_{4}(x_4^{1},x_1^{0})=Du_{1}(x_{1}^{0})=D_{x_1}c_{1}(x_1^{0},x_2^{1}) + D_{x_1}c_{4}(x_4^{1},x_1^{0}).$$
Thus, $D_{x_1}c_{1}(x_1^{0},x_2^{2})=D_{x_1}c_{1}(x_1^{0},x_2^{1})$ and by Assumption \ref{Ass:3}, $x_2^{1}=x_2^{2}$. Finally, we clearly have $(x_2^{2},x_3^{2},x_4^{1})=(x_2^{1},x_3^{1},x_4^{1})\in M_{x_1^{0}4}$, hence applying one more time Lemma \ref{Art3:8} we get $D_{x_{1}}c(x_1^{0},x_2^{2},x_3^{2},x_4^{1})= Du_{1}(x_{1}^{0})=D_{x_{1}}c(x_{1}^{0},x_2^{2}, x_3^{2}, x_4^{2})$. It follows that $D_{x_1}c_{4}(x_4^{1},x_1^{0})=D_{x_1}c_{4}(x_4^{2},x_1^{0})$, and by Assumption \ref{Ass:3}, $x_4^{1}=x_4^{2}$. This completes the proof of the proposition.
\end{proof}
 \end{proposition}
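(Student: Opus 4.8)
The plan is to apply the characterization in Lemma~\ref{Art3:10}: fix a $4$-tuple of Borel functions $(u_1,u_2,u_3,u_4)$ satisfying inequality~\eqref{Art3:6} and a point $x_1^0$ with $M_{x_1^0 4}\neq\emptyset$, and show $M_{x_1^0 4}$ is a singleton. Since in the intended applications the first marginal is absolutely continuous (cf.\ the remark following Theorem~\ref{Main Theorem}) and Assumption~\ref{Ass:1} is phrased for such $x_1^0$, I would fix $x_1^0$ in the set where $Du_1(x_1^0)$ exists; by Remark~\ref{Art3:58}, singleton-ness of $M_{x_1^0 4}$ is then equivalent to the injectivity required by Definition~\ref{Art3:20}. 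So take $(x_2^1,x_3^1,x_4^1),(x_2^2,x_3^2,x_4^2)\in M_{x_1^0 4}$; I will show $x_i^1=x_i^2$ for $i=2,3,4$, eliminating the coordinates in the order $x_3$, then $x_2$, then $x_4$.

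The first step uses the partial-minimization structure of points in $M_{x_1^0 4}$: since equality holds in~\eqref{Art3:6} at each of the two points while the inequality holds everywhere, $(x_3^k,x_4^k)$ minimizes $(x_3,x_4)\mapsto c(x_1^0,x_2^k,x_3,x_4)-u_3(x_3)-u_4(x_4)$ for $k=1,2$. Comparing each minimizer with the other pair (keeping its own $x_2^k$), adding the two inequalities, and cancelling the $c_1$, $c_4$, $u_3$, $u_4$ contributions --- which survive unchanged thanks to the cyclic form~\eqref{Aert3:38} --- leaves exactly the reverse of the inequality in Assumption~\ref{Ass:1}. Hence that inequality is in fact an equality, which forces equality in each of the two comparison inequalities; in particular the mixed point $(x_2^1,x_3^2,x_4^1)$ again lies in $M_{x_1^0 4}$.

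The rest is a bootstrap using Lemma~\ref{Art3:8} together with the additivity $D_{x_4}c=D_{x_4}c_3+D_{x_4}c_4$ and $D_{x_1}c=D_{x_1}c_1+D_{x_1}c_4$. Applying Lemma~\ref{Art3:8} in the variable $x_4$ at the points $(x_1^0,x_2^1,x_3^1,x_4^1)$ and $(x_1^0,x_2^1,x_3^2,x_4^1)$ of $M_{x_1^0 4}$ gives $D_{x_4}c_3(x_3^1,x_4^1)=D_{x_4}c_3(x_3^2,x_4^1)$ (the $c_4$ term is common), so $x_3^1=x_3^2$ by Assumption~\ref{Ass:2}. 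With $x_3^1=x_3^2$, the section $x_4\mapsto c(x_1^0,x_2^2,x_3^2,x_4)-u_4(x_4)$ differs from $x_4\mapsto c(x_1^0,x_2^1,x_3^1,x_4)-u_4(x_4)$ only by a constant, so $x_4^1$ minimizes the former and hence $(x_1^0,x_2^2,x_3^2,x_4^1)\in M_{x_1^0 4}$; Lemma~\ref{Art3:8} in $x_1$ at this point and at $(x_1^0,x_2^1,x_3^1,x_4^1)$ yields $D_{x_1}c_1(x_1^0,x_2^2)=D_{x_1}c_1(x_1^0,x_2^1)$ (the $c_4$ term is common, both having $x_4$-argument $x_4^1$), so $x_2^1=x_2^2$ by Assumption~\ref{Ass:3}. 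Finally $(x_1^0,x_2^2,x_3^2,x_4^1)$ and $(x_1^0,x_2^2,x_3^2,x_4^2)$ both lie in $M_{x_1^0 4}$, and Lemma~\ref{Art3:8} in $x_1$ gives $D_{x_1}c_4(x_4^1,x_1^0)=D_{x_1}c_4(x_4^2,x_1^0)$ (now the $c_1$ term is common), so $x_4^1=x_4^2$ by Assumption~\ref{Ass:3}.

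I expect the main obstacle to be the bookkeeping that keeps every intermediate point genuinely in $M_{x_1^0 4}$ --- checking that full equality in~\eqref{Art3:6} (not merely equality in the section being optimized) holds at each mixed point, and that $Du_4$ is known to exist at the $x_4$-coordinate being used so that Lemma~\ref{Art3:8} applies there. The conceptual heart is the step where Assumption~\ref{Ass:1} promotes a one-sided $c$-monotonicity inequality to an equality; everything downstream is then routine, exploiting that the cyclic structure makes $D_{x_1}c$ depend only on the edges $c_1,c_4$ incident to $x_1$ and $D_{x_4}c$ only on the edges $c_3,c_4$ incident to $x_4$, so that the single-edge twist hypotheses~\ref{Ass:2} and~\ref{Ass:3} can be applied to the residual derivatives one at a time.
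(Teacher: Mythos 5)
Your proposal is correct and follows essentially the same route as the paper's proof: the same partial minimization in $(x_3,x_4)$ leading to \eqref{Art3:40}, upgraded to equality by Assumption \ref{Ass:1}, producing the mixed point $(x_2^1,x_3^2,x_4^1)\in M_{x_1^{0}4}$, followed by the same chain of applications of Lemma \ref{Art3:8} with Assumptions \ref{Ass:2} and \ref{Ass:3} eliminating $x_3$, then $x_2$, then $x_4$. The bookkeeping issues you flag (full equality in \eqref{Art3:6} at the mixed points and differentiability of $u_4$ at $x_4^1$) are handled exactly as you anticipate, so there is no gap.
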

Note that it is not hard to find costs of the form \eqref{Aert3:38}  satisfying Assumptions \ref{Ass:2} and \ref{Ass:3}.  Assumption \ref{Ass:1}, on the other hand, is less common. We proceed now to illustrate the previous proposition with an example, which can also be seen as a slight generalization of Proposition \ref{Art3:39} when $m=4$, $I_3$ is empty, $I_1=\{1,3\}$ and $I_2=\{2,4\}$. Note that the bi-linearity assumption from Proposition \ref{Art3:39} is relaxed here.
\begin{example}
For the cost \eqref{Aert3:38}, take $c_{1}(x_{1},x_{2})=f(x_1,x_2)$,   $c_{2}(x_{2},x_{3})=f(x_3,x_2)$, $c_{3}(x_{3},x_{4})=f(x_3, x_4)$ and $c_{4}(x_{4},x_{1})=f(x_1,x_4)$, where $f: \mathbb{R}^{n}\times \mathbb{R}^{n}\mapsto \mathbb{R}$ is a map satisfying:
\begin{enumerate}[label=(\roman*)]
\item \label{Art3:36} $f$ is additive with respect to the second coordinate; that is, $f(x, y + z)=f(x,y) + f(x,z)$ for every $x$ fixed.
\item \label{Art3:42} $f$ is bi-twisted; that is, the maps $y\mapsto D_x f(x,y)$ and $x\mapsto D_y f(x,y)$ are injective.
\end{enumerate}
Substituting into \eqref{Aert3:38} and using $(i)$ we get 
\begin{align}
c(x_1,x_2,x_3,x_4)&=f(x_1,x_2) + f(x_3,x_2) + f(x_3, x_4) + f(x_1,x_4)\nonumber \\
&=f(x_1, x_2 + x_4) + f(x_3, x_2 + x_4)\nonumber 
\end{align}
Now, let $(u_{1},u_2,u_3, u_{4})$ be a 4-tuple of Borel functions satisfying inequality \eqref{Art3:6}. Fix  $x_1^{0} \in  \left\lbrace x_1\in X_1: Du_1(x_1)\; \text{exists}\right\rbrace$ and let $(x_2^{1}, x_3^{1}, x_4^{1}), (x_2^{2}, x_3^{2}, x_4^{2})\in M_{x_1^{0}4}$. From Lemma \ref{Art3:8},
\begin{flalign*}
D_{x_{1}}f(x_1^{0}, x_2^{1}+x_4^{1})& =D_{x_{1}}c(x_{1}^{0},x_2^{1}, x_3^{1}, x_4^{1})\nonumber\\
&=Du_{1}(x_{1}^{0})\qquad\qquad\qquad\qquad \nonumber\\
&=D_{x_{1}}c(x_{1}^{0},x_2^{2}, x_3^{2}, x_4^{2})\nonumber\\
  &=D_{x_{1}}f(x_1^{0}, x_2^{2}+x_4^{2}).\nonumber\\
\end{flalign*}
From Assumption \ref{Art3:42}, we deduce
\begin{equation}
x_2^{1}+x_4^{1}=x_2^{2}+x_4^{2}.
\end{equation}
It follows that
\begin{align*}
c_{2}(x_{2}^{1},x_{3}^{1}) + c_{3}(x_{3}^{1},x_{4}^{1}) + c_{2}(x_{2}^{2},x_{3}^{2}) + c_{3}(x_{3}^{2},x_{4}^{2})&=f(x_{3}^{1},x_{2}^{1}) +f(x_{3}^{1},x_{4}^{1}) + f(x_{3}^{2},x_{2}^{2}) + f(x_{3}^{2},x_{4}^{2})\nonumber\\
&=f(x_{3}^{1},x_2^{1}+x_4^{1}) + f(x_{3}^{2},x_2^{2}+x_4^{2})\nonumber\\
&=  f(x_{3}^{1},x_2^{2}+x_4^{2}) + f(x_{3}^{2},x_2^{1}+x_4^{1})\nonumber\\
&= f(x_{3}^{2},x_{2}^{1}) + f(x_{3}^{2},x_{4}^{1})+ f(x_{3}^{1},x_{2}^{2})  + f(x_{3}^{1},x_{4}^{2})\nonumber\\
&=c_{2}(x_{2}^{1},x_{3}^{2}) + c_{3}(x_{3}^{2},x_{4}^{1}) + c_{2}(x_{2}^{2},x_{3}^{1}) + c_{3}(x_{3}^{1},x_{4}^{2}).
\end{align*}
Thus, Condition \ref{Ass:1} in Proposition \ref{Art3:41}   is trivially satisfied. Since Conditions \ref{Ass:2} and \ref{Ass:3} are also satisfied (by  $(ii)$), we obtain that $c$ is twisted on $c$-splitting sets with respect to $x_1$ and $x_4$.
\end{example}
The next Proposition was obtained from some of the essential ideas of Theorem 5.1 in \cite{Kim}, which provides Monge structure and uniqueness of the optimal measures in infimal convolution type examples.
\begin{proposition}
Let $m_{0}:=1< m_1<\ldots < m_n:=m$. Set $Y_{j}:=(x_{m_{j-1}+1}, \ldots, x_{m_{j}})$ and $(x_{m_{j-1}}, Y_{j}):=(x_{m_{j-1}},x_{m_{j-1}+1}, \ldots, x_{m_{j}})$, where $j=1, \ldots, n$,  and $(x_{m_0},Y_1, Y_2, \ldots, Y_n)=(x_1, \ldots, x_m)$.\\
 Consider the cost
\begin{equation}
c(x_{1}, \ldots, x_{m})=\sum_{j=1}^{n}c_{j}(x_{m_{j-1}}, Y_{j}),
\end{equation}
 and assume
\begin{enumerate}
\item  $c_{j}$  semi-concave for each $j$.
\item $c_{j}$ is twisted on $c_{j}$-splitting sets with respect to $x_{m_{j-1}}$; that is, for each $c_{j}$-splitting set $S^{j}\subseteq X_{m_{j-1}}\times \ldots \times X_{m_{j}}$ and $x_{m_{j-1}}\in \pi_{m_{j-1}}(S^{j})$, where $\pi_{m_{j-1}}:X_{m_{j-1}}\times \ldots \times X_{m_{j}}\mapsto X_{m_{j-1}}$ is the canonical projection, the map $ Y_{j}\mapsto D_{x_{m_{j-1}}}c_{j}(x_{m_{j-1}}, Y_{j})$ is injective on the subset of $S^{j}$  where $ D_{x_{m_{j-1}}}c_{j}(x_{m_{j-1}}, Y_{j})$ exists.
\end{enumerate}
Then, the cost $c(x_{1}, \ldots, x_{m})$ is twisted on $c$-splitting sets with respect to $x_1, x_{m_{1}}, \ldots, x_{m_{n-1}}$.
\end{proposition}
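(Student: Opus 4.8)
The plan is to verify the criterion of Lemma~\ref{Art3:10}; as in the preceding propositions (and the remark following Theorem~\ref{Main Theorem}), it is enough to fix an $m$-tuple $(u_1,\dots,u_m)$ of Borel functions satisfying \eqref{Art3:6}, fix $x_1^{0}$ with $Du_1(x_1^{0})$ existing and $M:=M_{x_1^{0}m_1\dots m_{n-1}}(u_1,\dots,u_m)\neq\emptyset$, and show that $M$ is a singleton (cf.\ Remark~\ref{Art3:58}). I would pin down the coordinates of a point of $M$ one block at a time, $j=1,\dots,n$, exploiting at each stage that the left pivot $x_{m_{j-1}}$ of block $j$ has already been determined and that the relevant potential is differentiable there.

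The key device is a family of auxiliary ``tail potentials'': set $g_n\equiv 0$ and, for $k=n-1,\dots,0$,
\[
g_k(x_{m_k}):=\inf\Big\{\textstyle\sum_{j=k+1}^{n}c_j(x'_{m_{j-1}},Y'_j)-\sum_{i=m_k+1}^{m}u_i(x'_i)\ :\ x'_i\in X_i\ (i>m_k),\ x'_{m_k}=x_{m_k}\Big\}.
\]
From \eqref{Art3:6} one checks that each $g_k$ is finite valued, and it is semi-concave, being an infimum of functions of $x_{m_k}$ that are semi-concave with a common constant (through $c_{k+1}(x_{m_k},\cdot)$). The role of the $g_k$'s is a telescoping splitting of \eqref{Art3:6}: for each $j$, the functions $p_{j-1}$ (equal to $u_1$ if $j=1$ and to $g_{j-1}$ if $j\geq 2$), $u_{m_{j-1}+1},\dots,u_{m_j-1}$, and $u_{m_j}-g_j$ are $c_j$-splitting functions for the set $S_j$ of block-$j$ coordinates $(x_{m_{j-1}},Y_j)$ of points of $M$ (with the first entry set to $x_1^0$ when $j=1$): the inequality version holds for all arguments because \eqref{Art3:6} for $c$ splits through the $g_k$'s, and equality holds along $S_j$ because it holds for $c$ at every point of $M$, which forces equality in each of the summed inequalities. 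Note $\pi_{m_{j-1}}(S_j)=\{x_1^0\}$ when $j=1$, and, once stage $j-1$ is completed, $\pi_{m_{j-1}}(S_j)$ is a single point $\{x^*_{m_{j-1}}\}$ when $j\geq 2$.

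The sweep is then an induction on $j$, with inductive hypothesis: $S_1,\dots,S_{j-1}$ are singletons and $Dg_{j-1}$ exists at $x^*_{m_{j-1}}$ (for $j=1$ the hypothesis is the given fact that $Du_1(x_1^0)$ exists). Because $Dp_{j-1}(x^*_{m_{j-1}})$ exists, Lemma~\ref{Art3:8} applied to $c_j$ with the above splitting functions shows that $Y_j\mapsto D_{x_{m_{j-1}}}c_j(x^*_{m_{j-1}},Y_j)$ exists and is \emph{constant} on $S_j$; since assumption 2 (twist on $c_j$-splitting sets with respect to $x_{m_{j-1}}$) makes this map \emph{injective} on $S_j$, and $S_j\neq\emptyset$, the set $S_j$ is a singleton $\{(x^*_{m_{j-1}},Y_j^*)\}$. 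For $j\leq n-1$ one still upgrades the inductive hypothesis by showing $Dg_j$ exists at the unique value $x^*_{m_j}$ of the last coordinate on $S_j$: freezing in the $c_j$-splitting inequality all variables of block $j$ except $x_{m_j}$ at their starred values gives $u_{m_j}-g_j\leq\psi_j$ on $X_{m_j}$, with equality at $x^*_{m_j}$, where $\psi_j$ is a constant shift of a restriction of $c_j$, hence semi-concave; equivalently $u_{m_j}\leq\psi_j+g_j$ with equality at $x^*_{m_j}$. Since $m_j\in\{m_1,\dots,m_{n-1}\}$, membership in $M$ forces $Du_{m_j}(x^*_{m_j})$ to exist, so $u_{m_j}$ is differentiable and touches the semi-concave function $\psi_j+g_j$ from below at $x^*_{m_j}$; this forces the superdifferential of $\psi_j+g_j$ there to be a singleton, and since the superdifferential of a sum of semi-concave functions is the Minkowski sum of the superdifferentials, both $\psi_j$ and $g_j$ are differentiable at $x^*_{m_j}$.

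This closes the induction; since the blocks' index sets cover $\{2,\dots,m\}$, after stage $n$ every coordinate is constant on $M$, so $M$ is a singleton. I expect the delicate point to be precisely this propagation of differentiability of the tail potentials, as it is what makes Lemma~\ref{Art3:8} applicable at each stage; it rests on the elementary facts that a differentiable function touching a semi-concave one from below makes the latter differentiable at that point, and that superdifferentials add under sums (so a singleton Minkowski sum forces each summand to be a singleton). The other necessary but routine ingredient is verifying that the $g_k$'s are finite and that the telescoped inequalities genuinely furnish $c_j$-splitting functions with equality precisely along $S_j$.
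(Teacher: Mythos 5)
Your argument is correct, but it follows a genuinely different route from the paper's. The paper first shows that for any $c$-splitting set $S$ the block projection $S^{j}=\pi_{x_{m_{j-1}}\ldots x_{m_j}}(S)$ is itself a $c_j$-splitting set, proving $c_j$-cyclical monotonicity of $S^j$ by lifting points to $S$ and choosing the permutations $\sigma_i$ constant on the index ranges before and after block $j$ (and invoking the equivalence of cyclically monotone and splitting sets); it then runs the same block-by-block induction you propose, but inside $W_{x_1^{0}m_1\ldots m_{n-1}}$ with the \emph{original} splitting functions: equality of $D_{x_1}c$ pins block $1$ via the twist of $c_1$ on $S^1$, and at each later stage Lemma \ref{Art3:8} applied to the full cost gives $D_{x_{m_{j-1}}}c=Du_{m_{j-1}}$ at the common pivot, after which the already-pinned $c_{j-1}$-part of the derivative cancels and the twist of $c_j$ on $S^j$ applies. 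You instead exhibit the block sets as $c_j$-splitting sets by constructing explicit tail potentials $g_k$ through partial infimal convolution and telescoping \eqref{Art3:6}, and then apply Lemma \ref{Art3:8} blockwise to $c_j$ with these potentials, which forces you to propagate differentiability of $g_j$ at the pinned pivot via the touching argument and the (easy) inclusion $\partial\psi_j(x)+\partial g_j(x)\subseteq\partial(\psi_j+g_j)(x)$; your bookkeeping there (finiteness, uniform semi-concavity of the $g_k$, equality forced in each telescoped block inequality) all checks out. What each approach buys: yours avoids the cyclical-monotonicity/Griessler projection step and produces explicit dual potentials for each block, at the price of the auxiliary differentiability propagation; the paper's is lighter at the induction step (the analogous superdifferential-splitting of $D_{x_{m_{j-1}}}c$ across $c_{j-1}$ and $c_j$ is the only place the same kind of fact is used, implicitly) and, because it argues directly with $W$-sets without ever using $Du_1(x_1^{0})$, it proves the twist condition exactly as in Definition \ref{Art3:20}, whereas your proof, run through Lemma \ref{Art3:10} and Remark \ref{Art3:58} only at points where $Du_1(x_1^{0})$ exists, establishes the restricted singleton criterion of the remark following Theorem \ref{Main Theorem} — sufficient for the Monge/uniqueness conclusions and consistent with how the other Section \ref{sect: examples} propositions are argued, but formally weaker than the literal statement, which is worth flagging.
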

\begin{proof}
Fix $j\in \{1, \ldots, n\}$. Let us first prove that for every $c$-splitting set
 $S\subseteq \prod_{i=1}^{m}X_{i}$, the set $S^{j}:=\pi_{x_{m_{j-1}}\ldots  x_{m_{j}}} (S)$ is a $c_j$-splitting set on $\prod_{i=m_{j-1}}^{m_j}X_{i}$, or equivalently, a $c_j$-cyclical monotone set on $\prod_{i=m_{j-1}}^{m_j}X_{i}$, where  $\pi_{x_{m_{j-1}}\ldots  x_{m_{j}}}:X\mapsto \prod_{i=m_{j-1}}^{m_j}X_{i}$ is the canonical projection. Indeed, fix $S$ a $c$-splitting set on $X$, and let $\left\lbrace (x_{m_{j-1}}^{k}, \ldots, x_{m_{j}}^{k}) \right\rbrace_{k=1}^{p}\subseteq S^{j}$ and  $\sigma_{m_{j-1}}, \ldots, \sigma_{m_{j}}\in S_P$, where $S_P$ denotes the set of permutations of $P= \{1, \ldots, p\}$. We want to show 
\begin{equation}\label{Art3:32}
\sum_{k=1}^{p}c_{j}(x_{m_{j-1}}^{k}, Y_{j}^{k})=\sum_{k=1}^{p}c_j(x_{m_{j-1}}^{k}, \ldots, x_{m_{j}}^{k})\leq \sum_{k=1}^{p}c_j(x_{m_{j-1}}^{\sigma_{m_{j-1}}(k)}, \ldots, x_{m_{j}}^{\sigma_{m_{j}}(k)}).
\end{equation}
Note that for each $k\in P$, there are $Y_{s}^{k}=(x_{m_{s-1}+1}^{k}, \ldots,x_{m_{s}}^{k})$, $s\neq j$, such that  $(x_{1}^{k},Y_1^{k},Y_2^{k}, \ldots, Y_n^{k}) \in S $. 
 Set 
\begin{equation}\label{Art3:33}
\sigma_{i}= \begin{cases} 
      \sigma_{m_{j-1}} & \text{if}\quad 1\leq i \leq m_{j-1}\\
       \sigma_{m_{j}} & \text{if}\quad  m_{j}\leq i\leq m. \\
        \end{cases}
\end{equation}
Since $S$ is $c$-cyclically monotone we get
\begin{align}\label{Art3:29}
&\sum_{k=1}^{p}c_{j}(x_{m_{j-1}}^{k}, Y_{j}^{k})+\sum_{s=1}^{j-1}\sum_{k=1}^{p}c_{s}(x_{m_{s-1}}^{k}, Y_{s}^{k})+\sum_{s=j+1}^{n}\sum_{k=1}^{p}c_{s}(x_{m_{s-1}}^{k}, Y_{s}^{k})\nonumber\\
&=\sum_{s=1}^{n}\sum_{k=1}^{p}c_{s}(x_{m_{s-1}}^{k}, Y_{s}^{k})\nonumber\\
&=\sum_{k=1}^{p}c(x_1^{k},\ldots, x_m^{k})\nonumber\\
&\leq  \sum_{k=1}^{p}c(x_1^{\sigma_1(k)},\ldots, x_m^{\sigma_m(k)})\nonumber\\
&=\sum_{k=1}^{p}c_{j}(x_{m_{j-1}}^{\sigma_{m_{j-1}}(k)},x_{m_{j-1}+1}^{\sigma_{m_{j-1}+1}(k)}, \ldots,x_{m_{j}}^{\sigma_{m_{j}}(k)})+\sum_{s=1}^{j-1}\sum_{k=1}^{p}c_{s}(x_{m_{s-1}}^{\sigma_{m_{s-1}}(k)},x_{m_{s-1}+1}^{\sigma_{m_{s-1}+1}(k)}, \ldots,x_{m_{s}}^{\sigma_{m_{s}}(k)})\nonumber\\
&+\sum_{s=j+1}^{n}\sum_{k=1}^{p}c_{s}(x_{m_{s-1}}^{\sigma_{m_{s-1}}(k)},x_{m_{s-1}+1}^{\sigma_{m_{s-1}+1}(k)}, \ldots,x_{m_{s}}^{\sigma_{m_{s}}(k)})
\end{align}
From \eqref{Art3:33} we have 
\begin{align}\label{Art3:30}
\sum_{s=1}^{j-1}\sum_{k=1}^{p}c_{s}(x_{m_{s-1}}^{\sigma_{m_{s-1}}(k)},x_{m_{s-1}+1}^{\sigma_{m_{s-1}+1}(k)}, \ldots,x_{m_{s}}^{\sigma_{m_{s}}(k)})&=\sum_{s=1}^{j-1}\sum_{k=1}^{p}c_{s}(x_{m_{s-1}}^{\sigma_{m_{j-1}}(k)},x_{m_{s-1}+1}^{\sigma_{m_{j-1}}(k)}, \ldots,x_{m_{s}}^{\sigma_{m_{j-1}}(k)})\nonumber\\ 
&=\sum_{s=1}^{j-1}\sum_{k=1}^{p}c_{s}(x_{m_{s-1}}^{k},x_{m_{s-1}+1}^{k}, \ldots,x_{m_{s}}^{k})\nonumber\\
&=\sum_{s=1}^{j-1}\sum_{k=1}^{p}c_{s}(x_{m_{s-1}}^{k}, Y_{s}^{k}),
\end{align}
\begin{align}\label{Art3:31}
\sum_{s=j+1}^{n}\sum_{k=1}^{p}c_{s}(x_{m_{s-1}}^{\sigma_{m_{s-1}}(k)},x_{m_{s-1}+1}^{\sigma_{m_{s-1}+1}(k)}, \ldots,x_{m_{s}}^{\sigma_{m_{s}}(k)})&=\sum_{s=j+1}^{n}\sum_{k=1}^{p}c_{s}(x_{m_{s-1}}^{\sigma_{m_{j}}(k)},x_{m_{s-1}+1}^{\sigma_{m_{j}}(k)}, \ldots,x_{m_{s}}^{\sigma_{m_{j}}(k)})\nonumber\\
&=\sum_{s=j+1}^{n}\sum_{k=1}^{p}c_{s}(x_{m_{s-1}}^{k},x_{m_{s-1}+1}^{k}, \ldots,x_{m_{s}}^{k})\nonumber\\
&=\sum_{s=j+1}^{n}\sum_{k=1}^{p}c_{s}(x_{m_{s-1}}^{k}, Y_{s}^{k}).
\end{align}
Substituting the above equalities into inequality \eqref{Art3:29} we get \eqref{Art3:32}; that is, $S^{j}$ is a $c_j$-splitting set on $\prod_{i=m_{j-1}}^{m_j}X_{i}$.\\
Now, let $(u_1,\ldots, u_m)$ be an $m$-tuple of $c$-splitting functions for $S$ and fix $x_{1}^{0}\in \pi_{1}(S)$. Assume $D_{x_{1}}c(x_{1}^{0},x_2^{1},\ldots, x_{m}^{1})$ and $ D_{x_{1}}c(x_{1}^{0},x_2^{2},\ldots, x_{m}^{2})$ exist, and
$$ D_{x_{1}}c(x_{1}^{0},x_2^{1},\ldots, x_{m}^{1})= D_{x_{1}}c(x_{1}^{0},x_2^{2},\ldots, x_{m}^{2}), $$
where $(x_{2}^{1},\ldots, x_{m}^{1}), (x_{2}^{2},\ldots, x_{m}^{2})\in W_{x_{1}^{0}m_1\ldots m_{n-1}}$. Since $c_j$ does not depend on $x_{1}$ for every $j\in \{2, \ldots, n\}$, we immediately get $$ D_{x_{1}}c_1(x_{1}^{0},x_2^{1},\ldots, x_{m_1}^{1})= D_{x_{1}}c_1(x_{1}^{0},x_2^{2},\ldots, x_{m_1}^{2}), $$
then 
\begin{equation}\label{Art3:59}
x_{j}^{1}=x_{j}^{2}\;\;\text{for every}\;\; j\in \{2,\ldots, m_1\},
\end{equation}
 as clearly $(x_{1}^{0},x_2^{1},\ldots, x_{m_1}^{1}), (x_{1}^{0},x_2^{2},\ldots, x_{m_1}^{2})\in S^{1}$ and $c_1$ is twisted on the $c_1$-splitting set $S^1$.  In particular, $x_{m_1}^{1}=x_{m_1}^{2}$ and by Lemma \ref{Art3:8},
$$ D_{x_{m_1}}c(x_{1}^{0},x_2^{1},\ldots, x_{m}^{1})=Du_{m_1}(x_{m_1}^{1})= Du_{m_1}(x_{m_1}^{2})= D_{x_{m_1}}c(x_{1}^{0},x_2^{2},\ldots, x_{m}^{2}) $$
(here the differentiability of $u_{m_1}$ at $x_{m_1}^{1}$ follows from the fact that $(x_{2}^{1},\ldots, x_{m}^{1}),(x_{2}^{2},\ldots, x_{m}^{2})\in W_{x_{1}^{0}m_1\ldots m_{n-1}}$).
Hence,
 $$ D_{x_{m_1}}c_1(x_{1}^{0},x_2^{1},\ldots, x_{m_1}^{1}) + D_{x_{m_1}}c_2(x_{m_1}^{1}, \ldots, x_{m_2}^{1})=  D_{x_{m_1}}c_1(x_{1}^{0},x_2^{2},\ldots, x_{m_1}^{2}) + D_{x_{m_1}}c_2(x_{m_1}^{2}, \ldots, x_{m_2}^{2}).$$
Combining this with \eqref{Art3:59} we get 
$$D_{x_{m_1}}c_2(x_{m_1}^{1},x_{m_1 + 1}^{1} \ldots, x_{m_2}^{1})= D_{x_{m_1}}c_2(x_{m_1}^{1},x_{m_1 + 1}^{2} \ldots, x_{m_2}^{2}).$$
Since $c_2$ is twisted on the $c_2$-splitting set $S^{2}$ and  $(x_{m_1}^{1},x_{m_1 + 1}^{1} \ldots, x_{m_2}^{1}), (x_{m_1}^{1},x_{m_1 + 1}^{2} \ldots, x_{m_2}^{2})\in S^{2}$, we deduce 
$x_{j}^{1}=x_{j}^{2}$ for every $j\in \{m_1+1,\ldots, m_2\}$. Note that this is an iterative process, so continuing with this inductive reasoning we get $x_{j}^{1}=x_{j}^{2}$ for every $j\in \{2,\ldots, m\}$. This completes the proof of the proposition.
\end{proof}

In the following proposition, for a given subset $Y:=\{x_{t_1}, \ldots,x_{t_{s}}\}\subseteq V=\{x_{1},\ldots, x_{m}\}$ with $t_1< \ldots < t_s$ and $x\in V\setminus Y$, we will write $(Y,x):=(x_{t_{1}}, \ldots,x_{t_{s}},x)$ and $(X^{k},x^{k}):=(x_{t_{1}}^{k}, \ldots,x_{t_{s}}^{k},x^{k})$, $k=1,2$. 
\begin{proposition}
Fix $s\in\{2, \ldots,m-1\}$. Consider a sequence $\{t_{\alpha}\}_{\alpha=1}^{m-(s+1)}$ and sets $Y_2, \ldots, Y_{m-s +1}$ such that $x_{t_{\alpha}}\in Y_{\alpha +1}$, $\alpha=1, \ldots, m-(s+1)$ and $Y_{j}\subseteq \{x_{2}, \ldots, x_{s+j-2}\}\setminus \{x_{t_{\alpha}}\}_{\alpha=1}^{j-2}$ for every $j=2, \ldots, m-s+1$. Consider the cost
\begin{equation}
c(x_{1}, \ldots, x_{m})= c_{1}(x_{1},\ldots,x_{s})+\sum_{j=2}^{m-s+1}c_{j}(Y_{j}, x_{s+j-1})
\end{equation}
where   $c_{j}$ is semi-concave for each $j$, and suppose 
\begin{enumerate}
\item \label{Art3:13} $c_1$ is twisted on $\pi_{1,\ldots, s}(S)$ for every $c$-splitting set $S$, where $\pi_{1,\ldots, s}:\prod_{i=1}^{m}X_i \mapsto \prod_{i=1}^{s}X_i$ is the canonical projection; that is, for every $c$-splitting set $S$ and $x_1^{0} \in \pi_{1}(S)$, the map 
$$(x_2, \ldots, x_s)\mapsto  D_{x_{1}}c_1(x_{1}^{0},x_{2}, \ldots, x_{s}) $$ is injective on $\{(x_2, \ldots, x_s): (x_1^{0}, x_2, \ldots, x_s)\in \pi_{1,\ldots, s}(S)\}$.
\item \label{Art3:11} $c_{j}$ is $(x_{t_{j-1}}, x_{s+j-1})$ twisted for all $j=2, \ldots, m-s+1$; that is, the map $ x_{s+j-1}\mapsto D_{x_{t_{j-1}}}c_{j}(Y_{j}, x_{s+j-1})$ is injective on the subset of $X_{s+j-1}$  where $D_{x_{t_{j-1}}}c_{j}(Y_{j}, x_{s+j-1})$ exists,  for every $j=2, \ldots, m-s+1$ and $Y_{j}$ fixed.
\end{enumerate}
Then, $c$ is twisted on $c$-splitting sets with respect to the variables $x_1, x_{t_1}, \ldots, x_{t_{m-s}}$.
\end{proposition}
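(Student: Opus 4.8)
\textit{Proof strategy.}
The plan is to verify the twist condition of Definition \ref{Art3:20} directly in the equivalent form supplied by Lemma \ref{Art3:10}: fix an $m$-tuple of Borel functions $(u_1,\dots,u_m)$ satisfying \eqref{Art3:6} and a point $x_1^{0}$ with $M:=M_{x_1^{0}t_1\dots t_{m-s}}(u_1,\dots,u_m)\neq\emptyset$; pick two elements $(x_2^{1},\dots,x_m^{1}),(x_2^{2},\dots,x_m^{2})$ of $M$ at which $D_{x_1}c$ exists and takes the same value, and prove they coincide. (Equivalently, by Remark \ref{Art3:58}, one may restrict to $x_1^{0}$ where $Du_1$ exists and show $M$ is a singleton.) The workhorse will be Lemma \ref{Art3:8}, which turns differentiability of a single $c$-splitting function $u_k$ at a point of the splitting set into differentiability of $c$ in $x_k$ there, with $D u_k=D_{x_k}c$.

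The first step handles the block $x_2,\dots,x_s$. Since only $c_1$ depends on $x_1$, the hypothesis $D_{x_1}c(x_1^{0},x_2^{1},\dots,x_m^{1})=D_{x_1}c(x_1^{0},x_2^{2},\dots,x_m^{2})$ reduces to $D_{x_1}c_1(x_1^{0},x_2^{1},\dots,x_s^{1})=D_{x_1}c_1(x_1^{0},x_2^{2},\dots,x_s^{2})$. The two-point set $\{(x_1^{0},x_i^{1})_{i\ge2},(x_1^{0},x_i^{2})_{i\ge2}\}$ (or the whole fiber $\{(x_1^{0},x_2,\dots,x_m):(x_2,\dots,x_m)\in M\}$) is a $c$-splitting set with splitting functions $(u_1,\dots,u_m)$ whose projection to the first factor is $\{x_1^{0}\}$, so Assumption \ref{Art3:13} forces $(x_2^{1},\dots,x_s^{1})=(x_2^{2},\dots,x_s^{2})$.

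The core of the argument is an induction on $j=2,\dots,m-s+1$ with hypothesis ``$x_i^{1}=x_i^{2}$ for all $i\le s+j-2$'', the base case $j=2$ being the previous step. At stage $j$, because $x_{t_{j-1}}\in Y_j\subseteq\{x_2,\dots,x_{s+j-2}\}$ we already have $x_{t_{j-1}}^{1}=x_{t_{j-1}}^{2}$, and since both points lie in $M$, $Du_{t_{j-1}}$ exists at this common value; Lemma \ref{Art3:8} then gives $D_{x_{t_{j-1}}}c(x_1^{0},x_2^{k},\dots,x_m^{k})=Du_{t_{j-1}}(x_{t_{j-1}}^{k})$ for $k=1,2$, so the two gradients agree. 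I would next use the structural constraints $Y_l\subseteq\{x_2,\dots,x_{s+l-2}\}\setminus\{x_{t_\alpha}\}_{\alpha=1}^{l-2}$ to see that, among the summands of $c$, the only one depending on $x_{t_{j-1}}$ whose value can differ between the two points is $c_j$: the terms $c_1$ and the $c_l$ with $l<j$ involve only coordinates of index $\le s+j-2$, which agree by hypothesis, whereas for $l>j$ one has $t_{j-1}\in\{t_\alpha\}_{\alpha=1}^{l-2}$ and $t_{j-1}\le s+j-2<s+l-1$, so $c_l$ does not involve $x_{t_{j-1}}$ at all. Cancelling the shared terms leaves $D_{x_{t_{j-1}}}c_j(Y_j,x_{s+j-1}^{1})=D_{x_{t_{j-1}}}c_j(Y_j,x_{s+j-1}^{2})$ (the entries of $Y_j$ being equal), and Assumption \ref{Art3:11} yields $x_{s+j-1}^{1}=x_{s+j-1}^{2}$, completing the inductive step. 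At $j=m-s+1$ one has $s+j-1=m$, so $x_i^{1}=x_i^{2}$ for every $i$, i.e. $M$ is a singleton, which by Lemma \ref{Art3:10} is exactly the claimed twist property.

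The one genuinely delicate point is the index bookkeeping in the induction: the constraints placing each pivot $x_{t_{j-1}}$ inside $Y_j$ and bounding $Y_l$ by $\{x_2,\dots,x_{s+l-2}\}\setminus\{x_{t_\alpha}\}_{\alpha=1}^{l-2}$ must be tracked carefully so that differentiating $c$ in $x_{t_{j-1}}$ isolates precisely the new summand $c_j$ and the new variable $x_{s+j-1}$; once that combinatorial matching is pinned down, the rest is a routine chain of applications of Lemmas \ref{Art3:8} and \ref{Art3:10} together with the twist hypotheses on $c_1$ and on the $c_j$'s.
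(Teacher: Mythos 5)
Your proof is correct and follows essentially the same route as the paper: reduce the equality of $D_{x_1}c$ to $D_{x_1}c_1$ and apply the twist hypothesis on $c_1$ for the block $x_2,\dots,x_s$, then induct along $j$, using Lemma \ref{Art3:8} at the pivot $x_{t_{j-1}}$ and the structural constraints on the $Y_l$ to isolate $D_{x_{t_{j-1}}}c_j$ and invoke the $(x_{t_{j-1}},x_{s+j-1})$-twist. The only (immaterial) difference is that you phrase the argument on $M_{x_1^{0}t_1\dots t_{m-s}}$ via Lemma \ref{Art3:10}, whereas the paper works directly with $W_{x_1^{0}t_1\dots t_{m-s}}$ and the given splitting set.
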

\begin{proof}
 Let $S\subseteq X_{1}\times \ldots \times X_{m}$ be a  $c$-splitting set and $(u_{1},\ldots, u_{m})$ an $m$-tuple  of $c$-splitting functions for $S$. Fix $x_1^{0} \in \pi_{1}(S)$ and assume $ D_{x_{1}}c(x_{1}^{0},x_{2}^{1}, \ldots, x_{m}^{1})=D_{x_{1}}c(x_{1}^{0},x_{2}^{2}, \ldots, x_{m}^{2})$, where $(x_{2}^{1}, \ldots, x_{m}^{1})$, $(x_{2}^{2}, \ldots, x_{m}^{2}) \in W_{x_1^{0}, t_1, \ldots,t_{m-s} }$. We want to show that $x_j^{1}=x_j^{2}$ for every $j=2, \ldots,m$. Indeed, since the costs $c_2, \ldots, c_{ m-s+1}$ do not depend on $x_1$, we immediately get
$$ D_{x_{1}}c_1(x_{1}^{0},x_{2}^{1}, \ldots, x_{s}^{1})=D_{x_{1}}c_1(x_{1}^{0},x_{2}^{2}, \ldots, x_{s}^{2}).$$
Hence, by Assumption \ref{Art3:13} we get 
\begin{equation}\label{Art3:14}
\text{ $x_j^{1}=x_j^{2}$ for $2\leq j\leq s$.}
\end{equation}
To prove that $x_{s+j}^{1}=x_{s+j}^{2}$ for $1\leq j\leq m-s$ we use induction on $j$. For $j=1$, note that $x_{t_1}\in Y_2\subseteq \{x_2, \ldots,x_s\}$, so $x_{t_1}^{1}=x_{t_1}^{2}$ by \eqref{Art3:14}, and by Lemma \ref{Art3:8}
$$D_{x_{t_1}}c(x_{1}^{0},x_{2}^{1}, \ldots, x_{m}^{1})=Du_{t_1}(x_{t_1}^{1})=Du_{t_1}(x_{t_1}^{2})=D_{x_{t_1}}c(x_{1}^{0},x_{2}^{2}, \ldots, x_{m}^{2}).$$
Since $x_{t_1}\notin Y_j$ for $3\leq j\leq m-s+1$, we deduce 
$$D_{x_{t_1}}c_{1}(x_{1}^{0},x_{2}^{1}, \ldots, x_{s}^{1}) + D_{x_{t_1}}c_{2}(Y_2^{1},x_{s+1}^{1}) =D_{x_{t_1}}c_1(x_{1}^{0},x_{2}^{2}, \ldots, x_{s}^{2}) +  D_{x_{t_1}}c_{2}(Y_2^{2},x_{s+1}^{2}), $$
then by \eqref{Art3:14},
$$D_{x_{t_1}}c_{2}(Y_2^{1},x_{s+1}^{1}) =D_{x_{t_1}}c_{2}(Y_2^{2},x_{s+1}^{2})$$
and $Y_2^{1}=Y_2^{2}$. Consequently, we must have $x_{s+1}^{1}=x_{s+1}^{2}$, as $c_2$ is $(x_{t_1},x_{s+1})$ twisted on $c_2$-splitting sets, by Assumption \ref{Art3:11}. 
\par
Assume $x_{s+1}^{1}=x_{s+1}^{2}, \ldots, x_{s+k-1}^{1}=x_{s+k-1}^{2}$, where $1< k=j\leq m-s$. Combining this and \eqref{Art3:14} we get $x_{t_k}^{1}=x_{t_k}^{2}$, as $x_{t_k}\in Y_{k+1}\subseteq \{x_{2}, \ldots, x_{s+k-1}\}\setminus \{x_{t_1}, \ldots, x_{t_{k-1}}\}$. Then $$D_{x_{t_k}}c(x_{1}^{0},x_{2}^{1}, \ldots, x_{m}^{1})=Du_{t_k}(x_{t_k}^{1})=Du_{t_k}(x_{t_k}^{2})=D_{x_{t_k}}c(x_{1}^{0},x_{2}^{2}, \ldots, x_{m}^{2}).$$
Since $x_{t_k}\notin Y_j$ for $k+2\leq j\leq m-s+1$, we get 
\begin{equation}\label{Art3:15}
 D_{x_{t_k}}c_{1}(x_{1}^{0},x_{2}^{1}, \ldots, x_{s}^{1}) + \sum_{j=2}^{k+1} D_{x_{t_k}}c_{j}(Y_j^{1},x_{s+j-1}^{1}) =D_{x_{t_k}}c_1(x_{1}^{0},x_{2}^{2}, \ldots, x_{s}^{2}) +  \sum_{j=2}^{k+1} D_{x_{t_k}}c_{j}(Y_j^{2},x_{s+j-1}^{2}). 
\end{equation}
Now, by induction hypothesis and \eqref{Art3:14}, $D_{x_{t_k}}c_{1}(x_{1}^{0},x_{2}^{1}, \ldots, x_{s}^{1}) =D_{x_{t_k}}c_1(x_{1}^{0},x_{2}^{2}, \ldots, x_{s}^{2})$, $ D_{x_{t_k}}c_{j}(Y_j^{1},x_{s+j-1}^{1})=D_{x_{t_k}}c_{j}(Y_j^{2},x_{s+j-1}^{2})$ for every $j=2, \ldots,k$, and $Y_{k+1}^{1}=Y_{k+1}^{2}$. Hence,  \eqref{Art3:15} reduces to $$D_{x_{t_k}}c_{k+1}(Y_{k+1}^{1},x_{s+k}^{1}) =D_{x_{t_k}}c_{k+1}(Y_{k+1}
^{1},x_{s+k}^{2}).$$
We then conclude $x_{s+k}^{1}=x_{s+k}^{2}$, as $c_{k+1}$ is $(x_{t_{k}},x_{s+k})$ twisted by Assumption \ref{Art3:11}. This completes the proof of the proposition.
\end{proof}

\vspace{0.5cm}

\begin{footnotesize}
Department of Mathematical and Statistical Sciences - University of Alberta\\
Edmonton, Alberta - Canada T6G 2G1\\
\textit{Email address:} pass@ualberta.ca\\
\textit{Email address:} vargasji@ualberta.ca
\end{footnotesize}

\end{document}